\begin{document}

\newcommand{\defi}{\stackrel{\Delta}{=}}
\newcommand{\A}{{\cal A}}
\newcommand{\B}{{\cal B}}
\newcommand{\U}{{\cal U}}
\newcommand{\G}{{\cal G}}
\newcommand{\cZ}{{\cal Z}}
\newcommand\one{\hbox{1\kern-2.4pt l }}
\newcommand{\Item}{\refstepcounter{Ictr}\item[\left(\theIctr\right)]}
\newcommand{\QQ}{\hphantom{MMMMMMM}}

\newtheorem{Theorem}{Theorem}[section]
\newtheorem{Lemma}{Lemma}[section]
\newtheorem{Corollary}{Corollary}[section]
\newtheorem{Remark}{Remark}[section]
\newtheorem{Example}{Example}[section]
\newtheorem{Proposition}{Proposition}[section]
\newtheorem{Property}{Property}[section]
\newtheorem{Assumption}{Assumption}[section]
\newtheorem{Definition}{Definition}[section]
\newtheorem{Construction}{Construction}[section]
\newtheorem{Condition}{Condition}[section]
\newtheorem{Exa}[Theorem]{Example}
\newcounter{claim_nb}[Theorem]
\setcounter{claim_nb}{0}
\newtheorem{claim}[claim_nb]{Claim}
\newenvironment{cproof}
{\begin{proof}
 [Proof.]
 \vspace{-3.2\parsep}}
{\renewcommand{\qed}{\hfill $\Diamond$} \end{proof}}
\newcommand{\erhao}{\fontsize{21pt}{\baselineskip}\selectfont}
\newcommand{\xiaoerhao}{\fontsize{18pt}{\baselineskip}\selectfont}
\newcommand{\sanhao}{\fontsize{15.75pt}{\baselineskip}\selectfont}
\newcommand{\sihao}{\fontsize{14pt}{\baselineskip}\selectfont}
\newcommand{\xiaosihao}{\fontsize{12pt}{\baselineskip}\selectfont}
\newcommand{\wuhao}{\fontsize{10.5pt}{\baselineskip}\selectfont}
\newcommand{\xiaowuhao}{\fontsize{9pt}{\baselineskip}\selectfont}
\newcommand{\liuhao}{\fontsize{7.875pt}{\baselineskip}\selectfont}
\newcommand{\qihao}{\fontsize{5.25pt}{\baselineskip}\selectfont}
\newcounter{Ictr}
\renewcommand{\theequation}{
\arabic{equation}}
\renewcommand{\thefootnote}{\fnsymbol{footnote}}

\def\A{\mathcal{A}}

\def\C{\mathcal{C}}

\def\V{\mathcal{V}}

\def\I{\mathcal{I}}

\def\Y{\mathcal{Y}}

\def\X{\mathcal{X}}

\def\J{\mathcal{J}}

\def\Q{\mathcal{Q}}

\def\W{\mathcal{W}}

\def\S{\mathcal{S}}

\def\T{\mathcal{T}}

\def\L{\mathcal{L}}

\def\M{\mathcal{M}}

\def\N{\mathcal{N}}
\def\R{\mathbb{R}}
\def\H{\mathbb{H}}

\title{}
\author{}
\begin{center}
\topskip2cm
\LARGE{\bf Regularization parameter selection for low rank matrix recovery}
\end{center}

\begin{center}
\renewcommand{\thefootnote}{\fnsymbol{footnote}}Pan Shang,  Lingchen Kong, \footnote{e-mail: 18118019@bjtu.edu.cn,   konglchen@126.com, }\\
Beijing Jiaotong University, China\\
(September 16th, 2019)\\
\end{center}
\vskip4pt
\begin{abstract}
Low rank matrix recovery is the focus of many applications, but it is a NP-hard problem.  A popular way to deal with this problem is to solve its convex relaxation, the nuclear norm regularized minimization problem (NRM), which includes LASSO as a special case. There are some regularization parameter selection results for LASSO in vector case, such as screening rules, which improve the efficiency of the algorithms.  However, there are no corresponding parameter selection results for NRM in matrix case.  In this paper, we build up a novel rule to choose the regularization parameter for NRM under the help of duality theory. This rule claims that the regularization parameter can be easily chosen by feasible points of NRM and its dual problem, when the rank of the desired solution is no more than a given constant. In particular, we apply this idea to NRM with least square and Huber functions, and establish the easily calculated formula of regularization parameters. Finally, we report numerical results on some signal shapes, which state that our proposed rule shrinks the interval of the regularization parameter efficiently.
\keywords{ Regularization parameter selection rule;Low rank matrix recovery; Nuclear norm regularized minimization problem;  Duality theory}
\end{abstract}

\section{Introduction}
\label{intro}
Low rank matrix recovery problem arises in tremendous applications, such as machine learning \cite{C12}, signal process \cite{M16}, system identification \cite{L09}, biomedical imaging \cite{Z10} and so on, but it is a NP-hard problem.  A popular way to recovery the low rank matrix is to solve the nuclear norm minimization problem (e.g., Fazel \cite{F02}), which is its convex relaxation. Actually, the nuclear norm minimization problem can yield the exact solution of the low rank matrix recovery problem under some assumptions, such as the restricted isometry property (RIP, e.g., Recht et al. \cite{R10}, Cai and Zhang \cite{C13}) and the s-goodness condition (e.g., Kong et. al. \cite{K13}). In statistics and machine learning areas, we usually consider the corresponding unconstrained optimization problem, which is called the nuclear norm regularized minimization problem (NRM) (see, e.g., Koltchinskii et al. \cite{K11}, Mark and Justin \cite{M16}, Bottou and Nocedal \cite{B18},  Negahban and Wainwright \cite{N11}, Rohde and Tsybakov \cite{R11}, Zhou and Li \cite{Z14}).

Regularization parameter selection plays an essential role for solving the regularization model and cross validation is a common method to choose this parameter in statistics and machine learning. %while there are no theoretical results for NRM in optimization.
It is well known that there are some screening rules for LASSO in vector case, that help to choose the regularization (or tuning) parameter. See, e.g.,  Fan and Lv \cite{F08}, Ghaoui et al. \cite{G12},  Tibshirani et al. \cite{T12}, Wang et al. \cite{W15}, Eugene et al. \cite{E17}, Kuang et al. \cite{K17}, Xiang et al. \cite{X17}, Lee et al. \cite{L18}. For instance, Ghaoui et al. \cite{G12} constructed SAFE rules to eliminate predictors and these rules never remove active predictors. Tibshirani et al. \cite{T12} proposed strong rules for discarding inactive predictors under the unit slope bound assumption.  The strong rules screen out far more predictors than SAFE rules in practice and can be more effective by checking Karush-Kuhn-Tucker (KKT) conditions for any predictor.  Eugene et al. \cite{E17} built up  statics and dynamic  gap safe screening rules for LASSO which are based on the gap between feasible points of LASSO and its dual problem.  In the sense of the sparse solution of LASSO, screening rules can be applied to choose the regularization parameter.  That is, screening rules imply the parameter selection approach of LASSO which guarantee the sparsity of the solution of LASSO is no more than a given constant. However, to the best of our knowledge, there are no regularization parameter selection results for NRM from optimal perspective. Note that NRM includes sparse vector selection (compress sensing) as a special case. Thus, NRM degrades into LASSO type problems when the unknown variable is vector. One nature question occurs: can we establish the regularization parameter selection result for NRM?

In this paper, we give an affirmative answer and build up the regularization parameter selection rule for NRM. In order to do so, we present the dual form of NRM and establish the strong duality theorem.  With the help of duality theory,  we can obtain the  regularization parameter selection rule for NRM based on its dual solution, when the rank of the desired solution of NRM is no more than a constant. This is the primal result that can be used to select the regularization parameter, but it may be a complex work to get the dual solution. Furthermore, by analyzing the dual problem of NRM, we obtain a novel regularization parameter selection rule, which depends on feasible points of primal and  dual problems. Moreover, this idea is applied to the nuclear norm regularized least square minimization (LS-NRM) and  the nuclear norm regularized Huber minimization (H-NRM), respectively. For every problem, the regularization parameter selection rule gives a sequence of closed-form parameters and the rank of the solution is bounded in these intervals. For the purpose of enlightening the regularization parameter selection rule, we consider signal shapes in Zhou and Li \cite{Z14}. Numerical results show that the interval of the regularization parameter can be shrunken when the rank of the solution is given.

The rest of the paper is organized as follows. We review some related models and build up the duality theory of the general model NRM in Section 2.  In Section 3, we show the general idea of the regularization parameter selection rule for NRM. We apply the regularization parameter selection rule to LS-NRM and H-NRM, respectively.  In Section 5, we present the numerical results of the regularization parameter selection rule.  Some conclusions are given in Section 6.

\emph{Notations}: For vector $\textbf{x}\in R^{n}$, the 2-norm $\|\cdot\|_{2}$ is defined as $\|\textbf{x}\|_{2}=\sqrt{\sum_{i=1}^{n}x_{i}^{2}}$. For any matrix $M\in R^{p\times q}$, suppose $M$ has a singular value decomposition with nonincreasing singular values  $\sigma_{1}(M)\geq \cdots \sigma_{r}(M)\geq 0$ where $r=\rm{min\left\{p, q\right\}}$. There are some norms based on singular values of $M$. The Frobenius norm $\|\cdot\|_{F}$ is defined as $\|M\|_{F}=\sqrt{\sum_{i=1}^{p}\sum _{j=1}^{q}M_{ij}^{2}}=\sqrt{\sigma^{2}_{1}(M)+\cdots+\sigma^{2}_{r}(M)}$. The nuclear norm $\|\cdot\|_{*}$ is  the sum of singular values, i.e., $\|M\|_{*}=\sum_{i=1}^{r}{\sigma_{i}(M)}$. The spectral norm $\|\cdot\|_{2}$ is the largest singular value, i.e., $\|M\|_{2}=\sigma_{1}(M)$.
\section{Preliminary}
\label{sec:2}
In this section, we review the low rank matrix recovery problem (LRM) and its convex relaxation. In particular, we analyze the nuclear norm regularized minimization problem (NRM) and build up its duality theory.

The low rank matrix recovery problem is to find a low rank matrix $B$ which satisfies some linear constraints, that is
\begin{equation*}
\begin{split}
&\underset{B\in R^{p\times q}}\min {\rm{rank} (B)}\\
&s.t. \quad \mathcal{Y}=\mathcal{A}(B)+\epsilon, \quad \|\epsilon\|_{2}\leq\delta,
\end{split}
\end{equation*}
where $\mathcal{A}(B)=\left(\langle X_{1},B\rangle,\cdots,\langle X_{n},B\rangle\right)^{T}$, $(X_{i},y_{i})\in R^{p\times q}\times R$ ($i=1,2,\cdots,n$) are given, $\epsilon$ is the noise vector and $\delta\geq 0$ is a constant. If $\delta=0$, the model is the noiseless LMR. Otherwise, it is the noise LMR. It is a NP-hard problem because rank$(B)$ is noncontinous and nonconvex. However, LMR has many important applications in statistics, machine learning and so on. See, e.g., \cite{C12},  \cite{M16},  \cite{L09}, \cite{Z10}.

A popular technique is to solve LMR via its convex relaxation, which is the following well-known nuclear norm minimization problem  (e.g., Fazel \cite{F02})
\begin{equation*}
\begin{split}
&\underset{B\in R^{p\times q}}\min  \quad{||B||_{*}}\\
&s.t. \quad \mathcal{Y}=\mathcal{A}(B)+\epsilon,  \quad \|\epsilon\|_{2}\leq\delta.
\end{split}
\end{equation*}
Note that there are many researches on the theoretical guarantee that this relaxation problem yields the exact solution of the low rank matrix recovery problem under some conditions, such as the restricted isometry property (see, e.g., Recht et al. \cite{R10}, Cai and Zhang \cite{C13}) and  s-goodness condition (e.g., Kong et al.\cite{K11}). There are much attention on methods to solve this convex relaxation problem. One popular model is the nuclear norm regularized least square minimization (LS-NRM). See (7) in Section 4. In statistics, when the noise $\epsilon$ has zero mean and constant variance, LS-NRM has good performance to recovery the low rank matrix. However, it is not efficient when the noise has heavy tails or outlier. In fact, the distribution of noise is not known in practice. In this sense, another recently attractive model is the nuclear norm regularized Huber minimization (H-NRM, see, e.g., Huber \cite{H73}, Sun \cite{S17}, Elsener and Geer \cite{E18} and Chen et al.\cite{C}). See (10) in Section 4.
%\begin{equation*}
%\underset{B\in R^{p\times q}}\min H\left(\mathcal{Y}-\mathcal{A}(B)\right)+\lambda||B||_{*}.
%\end{equation*}
%where $H\left(\mathcal{Y}-\mathcal{A}(B)\right)=
%\sum_{i=1}^{n} h_{\kappa}(y_{i}-\langle A_{i},B\rangle)$ and
%\begin{eqnarray*}
%h_{\kappa}(t)=\begin{cases}
%\frac{1}{2}t^{2}, &|t|\leq\kappa\\
%\kappa|t|-\frac{1}{2}\kappa^{2}, &|t|>\kappa.
%\end{cases}
%\end{eqnarray*}

How to choose the regularization parameter for nuclear norm regularized minimization problem is an essential question. In literatures of LS-NRM and H-NRM, cross validation is often used to select this parameter.  To the best of our knowledge, there are no regularization parameter selection theoretical results from the point of optimization. In order to establish this kind of result, we introduce a general nuclear norm regularized minimization problem (NRM) as follows,
%\begin{equation*}
%\underset{B\in R^{p\times q}}\min \mathcal{F}\left(\mathcal{Y}-\mathcal{A}(B)\right)+\lambda||B||_{*},
%\end{equation*}
%where $\mathcal{F}$ is a real-valued loss function and decided by different type noises in data. In this model, the regularization parameter $\lambda$ controls the rank of the solution. Here, we rewrite the nuclear norm regularized model (NRM) as
\begin{eqnarray}
\underset{B\in R^{p\times q}}\min \left\{ F_{\lambda}(B)=\sum\limits_{i=1}^{n}f_{i}\left(y_{i}-\langle X_{i},B\rangle\right)+\lambda||B||_{*}\right\},
\end{eqnarray}
where $f_{i}: R\mapsto \overline{R}$ is a proper, closed and convex function with $\frac{1}{\alpha}$-Lipschitz continuous gradient ($\alpha>0$). Obviously, this problem includes LS-NRM and H-NRM as special cases. In order to address that the solution of problem (1) depends on the regularization parameter $\lambda$, we denote it as $B^{*}(\lambda)$.

Duality theory plays an important role in building up the regularization parameter selection results. Thus, we consider the dual problem of NRM (1). By introducing new variables $t_{i}= y_{i}-\langle X_{i},B\rangle$, $i=1,2,\cdots,n$, we rewrite problem (1) as
\begin{equation}
\begin{split}
&\underset{B\in R^{p\times q},\textbf{t}\in R^{n}}\min\left\{ F_{\lambda}(B,\textbf{t})=\sum\limits_{i=1}^{n}f_{i}(t_{i})+\lambda||B||_{*}\right\}\\
&s.t.\quad y_{i}-\langle X_{i},B\rangle-t_{i}=0, \quad i=1,2,\cdots, n,
\end{split}
\end{equation}
where $\textbf{t}=(t_{1},t_{2},\cdots,t_{n})^{T}$. Then, the  Lagrangian function of  problem (2) is
\begin{center}
$\textit{L}\left(B,\textbf{t};\theta \right)=\sum\limits_{i=1}^{n}f_{i}(t_{i})+\lambda||B||_{*}
+\sum\limits_{i=1}^{n}\theta_{i}\cdot\left(y_{i}-\langle X_{i},B\rangle-t_{i}\right)$,
\end{center}
where $\theta=(\theta_{1},\theta_{2},\cdots,\theta_{n})^{T}$ with $\theta_{i}\in R$ $ (i=1,2,\cdots,n)$ being the Lagrangian multiplier of problem (2).   By direct computation, we obtain that
%The Lagrangian dual problem of  (2) is given as
%\begin{center}
%$\underset{\theta\in R^{n}}\max~\underset{B\in R^{p\times q},\textbf{t}\in R^{n}}\min~{\textit{L}\left(B,\textbf{t};\theta\right)}$
%\end{center}
%and the detail result of $\underset{B\in R^{p\times q},\textbf{t}\in R^{n}}\min~{\textit{L}\left(B,\textbf{t};\theta\right)}$ can be obtained as
\begin{align*}
&\underset{B\in R^{p\times q},\textbf{t}\in R^{n}}\min{\textit{L}\left(B,\textbf{t};\theta\right)}\\
&=\underset{B\in R^{p\times q}}\min\left\{\lambda||B||_{*}-\left\langle\sum\limits_{i=1}^{n}\theta_{i} X_{i},B\right\rangle\right\}
+\underset{\textbf{t}\in R^{n}}\min\left\{\sum\limits_{i=1}^{n}{\left(f_{i}(t_{i})-\theta_{i}t_{i}\right)}\right\}+\langle\textbf{y},\theta\rangle\\
&=-\underset{B\in R^{p\times q}}\max\left\{\left\langle\sum\limits_{i=1}^{n}\theta_{i} X_{i},B\right\rangle-\lambda||B||_{*}\right\}
-\sum\limits_{i=1}^{n}\underset{t_{i}\in R}\max\left\{\theta_{i}t_{i}-f_{i}(t_{i})\right\}+\langle\textbf{y},\theta\rangle.
\end{align*}
Based on the definition of the conjugate function (see, Rockafellar \cite{R70}), we know that
\begin{eqnarray*}
\underset{B\in R^{p\times q}}\max\left\{\left\langle\sum\limits_{i=1}^{n}\theta_{i} X_{i},B\right\rangle-\lambda||B||_{*}\right\}=
\begin{cases}
0, &\left\|\sum\limits_{i=1}^{n}\theta_{i} X_{i}\right\|_{2}\leq\lambda,\\
+\infty, &\rm{otherwise},
\end{cases}
\end{eqnarray*}
and $\underset{t_{i}\in R}\max\left\{\theta_{i}t_{i}-f_{i}(t_{i})\right\}=f^{*}_{i}(\theta_{i})$. Thus,
\begin{center}
$\underset{B\in R^{p\times q},\textbf{t}\in R^{n}}\min{\textit{L}\left(B,\textbf{t};\theta\right)}
=\begin{cases}
-\sum\limits_{i=1}^{n}f^{*}_{i}(\theta_{i})+\langle\textbf{y},\theta\rangle, &\left\|\sum\limits_{i=1}^{n}\theta_{i} X_{i}\right\|_{2}\leq\lambda,\\
-\infty, &\rm{otherwise}.
\end{cases}
$
\end{center}
Therefore, the dual problem of (2) is
\begin{equation}
\begin{split}
&\underset{\theta \in R^{n}}\max \left\{ G_{\lambda}(\theta)=\langle \textbf{y}, \theta\rangle-\sum\limits_{i=1}^{n}f^{*}_{i}(\theta_{i})\right\}\\
&s.t.\quad\left\|\sum\limits_{i=1}^{n} \theta_{i}X_{i}\right\|_{2}\leq \lambda.\\
\end{split}
\end{equation}
Denote the %feasible area of (3) as
%$$\Omega_{D}(\lambda)=\left\{\left\|\sum\limits_{i=1}^{n} \theta_{i}X_{i}\right\|_{2}\leq \lambda\right\}$$
% and
the solution of (3) as $\theta^{*}(\lambda)$. The Karush-Kuhn-Tucker (KKT) system of (2) and (3) is
\begin{eqnarray}
\begin{cases}
\sum\limits_{i=1}^{n} \theta_{i}X_{i}\in \lambda\partial\|B\|_{*},\\
y_{i}-\langle X_{i},B\rangle-t_{i}=0,\\
\theta_{i}=\nabla f_{i}(t_{i}), \quad i=1,2,\cdots,n.
\end{cases}
\end{eqnarray}
If a pair $\left(B(\lambda), \textbf{t}(\lambda), \theta(\lambda)\right)$ satisfies the KKT system, it is called the KKT point of (2) and (3). Note that $(0,\textbf{y})$ is a feasible point of  problem (2), Slater constraint qualification holds on this problem. Hence, we easily show the following duality theorem.
\begin{Theorem}
%(i) (\textbf{Weak duality theorem}) Suppose $B(\lambda),\textbf{t}(\lambda)$ and  $\theta(\lambda)$ are any feasible points of problem (2) and (3), it holds that $$ F_{\lambda}\left(B(\lambda),\textbf{t}(\lambda)\right)\leq G_{\lambda}(\theta(\lambda)).$$
(\textbf{Strong duality theorem}) If the solution of problem (2) exists,  then there is a KKT point $(B^{*}(\lambda),\textbf{t}^{*}(\lambda),\theta^{*}(\lambda))$ such that  the optimal values of problems (2) and (3) are equal, i.e.,
$$F_{\lambda}(B^{*}(\lambda),\textbf{t}^{*}(\lambda))=G_{\lambda}(\theta^{*}(\lambda)).$$
Here, $B^{*}(\lambda)$ is the solution of (1) and $\theta^{*}(\lambda)$ is the solution of (3).
\end{Theorem}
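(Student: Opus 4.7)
The plan is to apply a standard convex-duality argument to the reformulated primal (2) and the Lagrangian dual (3), whose derivation the paper has already carried out. First I would record that (2) is a genuinely convex program: the objective $F_{\lambda}(B,\textbf{t}) = \sum_i f_i(t_i) + \lambda\|B\|_{*}$ is convex (each $f_i$ is convex by hypothesis, $\|\cdot\|_{*}$ is a norm hence convex, and sums of convex functions are convex), and the constraints $y_i - \langle X_i,B\rangle - t_i = 0$ are affine in $(B,\textbf{t})$. The reduction of $\min_{B,\textbf{t}} L(B,\textbf{t};\theta)$ to the form displayed just before the theorem already exhibits $G_{\lambda}(\theta)$ as the dual value, so what remains is (i) vanishing duality gap, (ii) attainment of the dual optimum, and (iii) the KKT system (4).

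Next I would invoke Slater's constraint qualification. Because the only constraints are affine equalities, Slater reduces to the existence of a feasible point lying in the relative interior of $\operatorname{dom}F_{\lambda}$; the paper's observation that $(0,\textbf{y})$ is feasible suffices, since the Lipschitz-gradient hypothesis guarantees $\operatorname{dom} f_i = R$ and $\operatorname{dom}\|\cdot\|_{*} = R^{p\times q}$. By a standard result in convex analysis (e.g., Rockafellar, \emph{Convex Analysis}, Thm.~28.4 / Cor.~28.2.2), Slater together with existence of a primal optimum implies both zero duality gap and attainment of a dual maximizer $\theta^{*}(\lambda)$. This gives $F_{\lambda}(B^{*}(\lambda),\textbf{t}^{*}(\lambda)) = G_{\lambda}(\theta^{*}(\lambda))$.

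Finally I would read off the KKT conditions from the saddle-point characterization. At a primal-dual optimum the map $(B,\textbf{t}) \mapsto L(B,\textbf{t};\theta^{*}(\lambda))$ is minimized at $(B^{*}(\lambda),\textbf{t}^{*}(\lambda))$. Applying the subdifferential calculus separately, the $B$-part gives $0 \in \lambda\,\partial\|B^{*}(\lambda)\|_{*} - \sum_i \theta_i^{*}(\lambda) X_i$, which is the first line of (4); the smooth $\textbf{t}$-part gives $\theta_i^{*}(\lambda) = \nabla f_i(t_i^{*}(\lambda))$, which is the third line; and primal feasibility supplies the middle line $y_i - \langle X_i,B^{*}(\lambda)\rangle - t_i^{*}(\lambda) = 0$. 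Equivalence of (1) and (2) (through the substitution $t_i = y_i - \langle X_i,B\rangle$) identifies $B^{*}(\lambda)$ with the solution of (1).

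The main obstacle is actually minor bookkeeping rather than a technical hurdle: one must verify that Slater's condition, as invoked for a problem with only affine equality constraints, correctly produces dual attainment rather than merely equality of the inf/sup values. This is where the finiteness of $\operatorname{dom} f_i$ and properness of $f_i^{*}$ (guaranteed by the Lipschitz-gradient hypothesis on each $f_i$) must be used carefully; apart from this, the proof is a routine application of classical convex duality to the setup already laid out in the excerpt.
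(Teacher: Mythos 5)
Your proposal is correct and follows essentially the same route as the paper: the paper's entire argument is the one-line observation that $(0,\textbf{y})$ is feasible so Slater's constraint qualification holds, whence strong duality and the existence of a KKT point follow from standard convex duality; you simply spell out the details (convexity of (2), reduction of Slater to feasibility plus relative-interior conditions for affine constraints, dual attainment via Rockafellar, and reading off the KKT system from the saddle point). No gaps; your added care about dual attainment and the full domain of each $f_i$ is a genuine improvement in rigor over the paper's terse justification.
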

\section{Regularization parameter selection rule}
\label{sec:3}
With the help of the duality theory in Section 2,  we will present the general regularization parameter selection rule for NRM in this section, which clarifies the regularization parameter can be selected by feasible points of NRM and its dual problem. When solving NRM with the selected parameter, we can guarantee that the rank of its solution is no more than a given data.

It is clear that the solution of problem (1)  is zero when the regularization parameter $\lambda$ is sufficient large. We wonder the lower bound of the $\lambda$ such that the corresponding solution is zero. The following lemma gives the interesting connection between zero solution of problem (1) and this lower bound.

\begin{Lemma}
Let  $\theta^{max}=\underset{\theta\in R^{n}}{\arg\max}\{\langle \textbf{y}, \theta\rangle-\sum\limits_{i=1}^{n}f^{*}_{i}(\theta_{i})\}$ and $\lambda_{max}=\left\|\sum\limits_{i=1}^{n}\theta_{i}^{max}X_{i}\right\|_{2}$. The following statements hold.\\
 (1) If $B^{*}(\lambda)=0$, then $\lambda\geq\lambda_{max}$.\\
(2) If $\lambda>\lambda_{max}$, then $B^{*}(\lambda)=0$.
%\begin{center}

%\end{center}
\end{Lemma}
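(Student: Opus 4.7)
My plan is to read both statements off the KKT system~(4) after first identifying $\theta^{\max}$ in closed form. Because the unconstrained dual objective is separable in the coordinates $\theta_i$, its maximizer satisfies the scalar first-order condition $y_i = \nabla f_i^{*}(\theta_i^{\max})$; since each $f_i$ has $\tfrac{1}{\alpha}$-Lipschitz gradient, each $f_i^{*}$ is $\alpha$-strongly convex, and $\nabla f_i$, $\nabla f_i^{*}$ are mutually inverse, so this inverts to $\theta_i^{\max} = \nabla f_i(y_i)$. This identity is the bridge that links the gradient line of~(4) to the quantity $\lambda_{\max}$.

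For statement~(1), I would start from the assumption $B^{*}(\lambda)=0$ and invoke Theorem~2.1 to obtain an accompanying KKT triple $(0,\textbf{t}^{*},\theta^{*})$. The feasibility equation $y_i - \langle X_i, 0\rangle - t_i^{*} = 0$ forces $\textbf{t}^{*} = \textbf{y}$; the gradient equation then yields $\theta^{*}_i = \nabla f_i(y_i) = \theta_i^{\max}$; and the subdifferential inclusion $\sum_i \theta_i^{*} X_i \in \lambda \partial\|0\|_{*} = \{Z : \|Z\|_2 \leq \lambda\}$ translates directly into $\lambda \geq \|\sum_i \theta_i^{\max} X_i\|_2 = \lambda_{\max}$.

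For statement~(2), the idea is the converse construction: exhibit $(0,\textbf{y},\theta^{\max})$ as a KKT triple. All three lines of~(4) are then immediate---the feasibility and gradient conditions by the definition of $\theta^{\max}$, and the subdifferential inclusion because $\|\sum_i \theta_i^{\max} X_i\|_2 = \lambda_{\max} < \lambda$ places $\sum_i \theta_i^{\max} X_i$ in the interior of $\lambda\, \partial\|0\|_{*}$. Since problem~(1) is convex, KKT is sufficient for optimality, so $B=0$ solves~(1), i.e.\ $B^{*}(\lambda)=0$.

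The main obstacle I foresee is bookkeeping around the identity $\theta_i^{\max} = \nabla f_i(y_i)$, which rests on the Lipschitz-gradient/strong-convexity duality for the conjugate; once this is in hand both parts collapse to a KKT check. A related delicate point, should one wish to read~(2) as a uniqueness claim rather than merely existence, is that for any $B \neq 0$ every element of $\partial\|B\|_{*}$ has spectral norm exactly $1$, so an element of $\lambda\,\partial\|B\|_{*}$ has spectral norm exactly $\lambda$; consequently $\sum_i \theta_i^{\max} X_i$, whose spectral norm is $\lambda_{\max}<\lambda$, cannot belong to $\lambda\,\partial\|B\|_{*}$, which rules out nonzero solutions.
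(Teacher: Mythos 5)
Your proposal is correct, and it reaches both conclusions by a somewhat different route than the paper. The paper argues through objective values: for part (1) it invokes the strong duality theorem to write $F_{\lambda}(0,\textbf{y})=G_{\lambda}(\theta^{*}(\lambda))$, uses the Fenchel--Young identity $\sum_{i}f_{i}(y_{i})=\max_{\theta}\{\langle\textbf{y},\theta\rangle-\sum_{i}f_{i}^{*}(\theta_{i})\}$ to see that the unconstrained maximum of the dual objective equals $F_{\lambda}(0,\textbf{y})$, and sandwiches these quantities to conclude that $\theta^{max}$ solves the constrained dual problem and is therefore feasible, giving $\lambda\geq\lambda_{max}$; part (2) is the mirror image of this value comparison. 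You instead work directly with the KKT system (4): the key step is the closed-form identification $\theta_{i}^{max}=\nabla f_{i}(y_{i})$ via the inversion $\partial f_{i}^{*}=(\partial f_{i})^{-1}$, after which the feasibility equation forces $\textbf{t}^{*}=\textbf{y}$, the gradient equation forces $\theta^{*}(\lambda)=\theta^{max}$, and the subdifferential inclusion $\sum_{i}\theta_{i}^{max}X_{i}\in\lambda\partial\|0\|_{*}$ delivers the bound; for part (2) you verify the same triple satisfies KKT and invoke sufficiency. Both proofs hinge on the same pivot (that $\theta^{max}$ is the constrained dual optimum in the relevant regime), but your first-order-condition route makes the mechanism more transparent and, in particular, your uniqueness argument for part (2) --- that every element of $\lambda\partial\|B\|_{*}$ for $B\neq0$ has spectral norm exactly $\lambda$, so $\sum_{i}\theta_{i}^{max}X_{i}$ with norm $\lambda_{max}<\lambda$ cannot lie in it --- spells out what the paper leaves as a one-line assertion. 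The only points worth tightening are cosmetic: you should phrase the optimality condition as $y_{i}\in\partial f_{i}^{*}(\theta_{i}^{max})$ rather than presuming $f_{i}^{*}$ is differentiable, and you may want to note that $\theta^{max}$ exists and is unique because each $f_{i}^{*}$ is $\alpha$-strongly convex (the paper uses this same fact).
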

\begin{proof}
(1) Since $B^{*}(\lambda)=0$, by Theorem 2.1, we know that $$F_{\lambda}(0,\textbf{y})=G_{\lambda}(\theta^{*}(\lambda)).$$
It is true that $G_{\lambda}(\theta^{*}(\lambda))\leq \underset{\theta\in R^{n}}\max\langle \textbf{y},\theta\rangle-\sum\limits_{i=1}^{n}f^{*}_{i}(\theta_{i})$. The convexity of $\{f_{i}\}_{i=1}^{n}$ implies that $$G(\theta_{max})=\sum\limits_{i=1}^{n}f_{i}(y_{i})=F_{\lambda}(0,\textbf{y}).$$ Combing these results, we know that
$$F_{\lambda}((0,\textbf{y})=G_{\lambda}(\theta^{*}(\lambda))\leq G(\theta_{max})=F_{\lambda}((0,\textbf{y}).$$
Because $f_{i}$ is differentiable and its gradient is  $\frac{1}{\alpha}$-Lipschitz continuous, its conjugate function  $f^{*}_{i}$ is $\alpha$-strongly convex (Hiriart-Urruty and Lemar$\acute{e}$chal \cite{H93}). Then, the function $G_{\lambda}(\theta)$ is $n\alpha$-strongly concave and the problem (3) has an unique solution. Therefore, $\theta^{max}$ is a solution of problem (3) and $\left\|\sum\limits_{i=1}^{n}\theta_{i}^{max}X_{i}\right\|_{2}\leq\lambda$. Hence, the desired result follows.\\
(2) If  $\lambda>\lambda_{max}$, we can get $\left\|\sum\limits_{i=1}^{n}\theta_{i}^{max}X_{i}\right\|_{2}\leq\lambda$ and $\theta^{*}(\lambda)=\theta^{max}$. By Theorem 2.1, we know that $G_{\lambda}(\theta^{max})=F_{\lambda}(B^{*}(\lambda),\textbf{t}^{*}(\lambda))$. Because $(0,\textbf{y})$ is a feasible point of problem (2), it is clear that $F_{\lambda}(B^{*}(\lambda),\textbf{t}^{*}(\lambda))\leq F_{\lambda}(0,\textbf{y})$. According to the convexity of $f_{i}$, $F_{\lambda}(0,\textbf{y})= G_{\lambda}(\theta^{max})$. Therefore, we know that
\begin{center}
$G_{\lambda}(\theta^{max})=F_{\lambda}(B^{*}(\lambda),\textbf{t}^{*}(\lambda))\leq F_{\lambda}(0,\textbf{y})= G_{\lambda}(\theta^{max})$.
\end{center}
Clearly, zero is a solution of NRM (1) when $\lambda>\lambda_{max}$. Moreover, $\sum\limits_{i=1}^{n} \theta^{max}_{i}X_{i}\in \lambda\partial\|B^{*}(\lambda)\|_{*}$. Thus, $B^{*}(\lambda)=0$ is the unique solution. \qed
\end{proof}

From Lemma 3.1, without loss of generality, we focus on the case of the regularization parameter $\lambda$ in $(0,\lambda_{max}]$ with $\lambda_{max}>0$. We next show a basic result which claims the relationship between regularization parameter $\lambda$ and the rank of the solution of NRM (1).
\begin{Theorem}\label{4.2}
For any $k\in\{1,2,\cdots,r\}$, if
$$\lambda>\sigma_{k}\left(\sum\limits_{i=1}^{n} \theta_{i}^{*}(\lambda)X_{i}\right),$$
then $\sigma_{k}(B^{*}(\lambda))=0$. This leads to rank$(B^{*}(\lambda))\leq k-1$.
\end{Theorem}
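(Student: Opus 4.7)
The plan is to exploit the subdifferential condition in the KKT system (4): the matrix $M := \sum_{i=1}^{n} \theta_i^*(\lambda) X_i$ must lie in $\lambda\partial\|B^*(\lambda)\|_*$, and the structure of this subdifferential forces the top $\mathrm{rank}(B^*(\lambda))$ singular values of $M$ to equal $\lambda$ exactly. The conclusion then comes by contraposition.

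First, I would invoke Theorem 2.1 to obtain a KKT triple $(B^*(\lambda), \textbf{t}^*(\lambda), \theta^*(\lambda))$, so the first line of (4) yields $M \in \lambda\partial\|B^*(\lambda)\|_*$. Setting $r^* := \mathrm{rank}(B^*(\lambda))$, I would write the thin SVD $B^*(\lambda) = U\Sigma V^T$ with $U \in R^{p \times r^*}$ and $V \in R^{q \times r^*}$ having orthonormal columns, and recall the classical formula
\[
\partial\|B^*(\lambda)\|_* = \{UV^T + Z : U^T Z = 0,\ ZV = 0,\ \|Z\|_2 \leq 1\}.
\]
Thus $M = \lambda(UV^T + Z)$ for some admissible $Z$.

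Next, I would extend $U$ and $V$ to full orthogonal matrices $[U, U_\perp]$ and $[V, V_\perp]$. The constraints $U^T Z = 0$ and $ZV = 0$ force $Z = U_\perp \tilde{Z} V_\perp^T$ for some $\tilde{Z}$ with $\|\tilde{Z}\|_2 = \|Z\|_2 \leq 1$, so
\[
[U, U_\perp]^T M [V, V_\perp] = \lambda \begin{pmatrix} I_{r^*} & 0 \\ 0 & \tilde{Z} \end{pmatrix}.
\]
Since orthogonal transformations preserve singular values, the singular values of $M$ are $r^*$ copies of $\lambda$ together with the singular values of $\lambda\tilde{Z}$, each of which is at most $\lambda$. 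Sorting in nonincreasing order, this gives $\sigma_j(M) = \lambda$ for every $j = 1, \ldots, r^*$.

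Finally, I would conclude by contraposition: if $\lambda > \sigma_k(M)$ for some $k \in \{1, \ldots, r\}$, then because $\sigma_j(M) = \lambda$ holds for all $j \leq r^*$, we must have $k > r^*$, so $\mathrm{rank}(B^*(\lambda)) \leq k-1$ and hence $\sigma_k(B^*(\lambda)) = 0$. The main obstacle I expect is justifying the subdifferential characterization of the nuclear norm (Watson, 1992), which is standard but not recorded in the excerpt; once it is in hand, the remainder reduces to a routine orthogonal change of basis and a comparison of sorted singular values.
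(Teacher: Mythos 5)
Your proof is correct and follows exactly the route the paper takes: invoke the KKT condition $\sum_{i=1}^{n}\theta_{i}^{*}(\lambda)X_{i}\in\lambda\partial\|B^{*}(\lambda)\|_{*}$ and read off the conclusion from the structure of the nuclear norm subdifferential. The paper's own proof is a two-line appeal to ``the formula of $\partial\|\cdot\|_{*}$''; you have simply supplied the details (Watson's characterization, the orthogonal change of basis, and the observation that the top $\mathrm{rank}(B^{*}(\lambda))$ singular values of the dual certificate equal $\lambda$) that the authors leave implicit.
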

\begin{proof}
According to the KKT system of problems (2) and (3),  we have
$$\sum\limits_{i=1}^{n} \theta^{*}_{i}(\lambda)X_{i}\in \lambda\partial\|B^{*}(\lambda)\|_{*}.$$
The desired conclusion follows from the formula of $\partial\|
\cdot\|_{*}$.\qed
\end{proof}

The above theorem presents the basic idea to choose the regularization parameter. That is, for any fixed regularization parameter, we may apply the dual solution $\theta^{*}(\lambda)$ to decide whether the  singular value of the $B^{*}(\lambda)$ is zero. Based on this, the rank of the $B^{*}(\lambda)$ can be bounded. Notice that when we apply Theorem  \ref{4.2} to choose the regularization parameter for NRM (1), we first need to get the solution of its dual problem (3). However, the dual solution may be hard to yield. Usually, we can easily obtain a feasible set which contains the dual solution. The following lemma states one feasible set based on the dual gap of problems (2) and (3). Here, $f_{i}$ in problem (2) is differentiable and its gradient is  $\frac{1}{\alpha}$-Lipschitz continuous. Denote the duality gap of problems (2) and (3) as $Gap(\lambda)=F_{\lambda}(B(\lambda),\textbf{t}(\lambda))-G_{\lambda}(\theta(\lambda))$.
\begin{Lemma}\label{4.3}
For any feasible points $(B(\lambda), \textbf{t}(\lambda))$ of problem (2) and $\theta(\lambda)$ of problem (3), it holds that
\begin{eqnarray}
\|\theta(\lambda)-\theta^{*}(\lambda)\|_{2}\leq\sqrt{\frac{2Gap(\lambda)}{n\alpha}}.
\end{eqnarray}
\end{Lemma}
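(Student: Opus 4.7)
The plan is to derive the bound from strong concavity of the dual objective $G_\lambda$ evaluated at the maximizer $\theta^*(\lambda)$, combined with weak duality to control the resulting gap.

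First, I would invoke the same Fenchel-type fact used in the proof of Lemma 3.1: since each $f_i$ is convex with $\frac{1}{\alpha}$-Lipschitz gradient, the conjugate $f_i^*$ is $\alpha$-strongly convex (Hiriart-Urruty and Lemar\'echal). Summing over $i$ and noting the linear term $\langle \textbf{y},\theta\rangle$ preserves strong concavity, the paper has already recorded that $G_\lambda$ is $n\alpha$-strongly concave on $R^n$. This is the structural ingredient I will use, and it is the only place the constant $n\alpha$ in the bound enters.

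Next, I would apply strong concavity at the maximizer $\theta^*(\lambda)$ of the constrained problem (3). For any feasible $\theta(\lambda)$, strong concavity gives
$$G_\lambda(\theta(\lambda)) \leq G_\lambda(\theta^*(\lambda)) + \langle \nabla G_\lambda(\theta^*(\lambda)), \theta(\lambda)-\theta^*(\lambda)\rangle - \frac{n\alpha}{2}\|\theta(\lambda)-\theta^*(\lambda)\|_2^2.$$
Because $\theta^*(\lambda)$ maximizes $G_\lambda$ over the feasible set of (3) and $\theta(\lambda)$ is itself feasible, the first-order optimality condition yields $\langle \nabla G_\lambda(\theta^*(\lambda)), \theta(\lambda)-\theta^*(\lambda)\rangle \leq 0$. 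Dropping this nonpositive term and rearranging gives
$$\frac{n\alpha}{2}\|\theta(\lambda)-\theta^*(\lambda)\|_2^2 \leq G_\lambda(\theta^*(\lambda)) - G_\lambda(\theta(\lambda)).$$

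Finally, I would close the bound by weak/strong duality. By Theorem 2.1, $G_\lambda(\theta^*(\lambda)) = F_\lambda(B^*(\lambda),\textbf{t}^*(\lambda))$, and since $(B(\lambda),\textbf{t}(\lambda))$ is feasible for (2), optimality of $(B^*(\lambda),\textbf{t}^*(\lambda))$ gives $F_\lambda(B^*(\lambda),\textbf{t}^*(\lambda)) \leq F_\lambda(B(\lambda),\textbf{t}(\lambda))$. Combining,
$$G_\lambda(\theta^*(\lambda)) - G_\lambda(\theta(\lambda)) \leq F_\lambda(B(\lambda),\textbf{t}(\lambda)) - G_\lambda(\theta(\lambda)) = Gap(\lambda),$$
and substituting into the previous display yields (5) after taking a square root.

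The only subtlety, and the step I would spend care on, is justifying the vanishing of the first-order term under the constraint $\|\sum_i \theta_i X_i\|_2 \leq \lambda$: one must phrase the optimality condition as a variational inequality on the feasible set rather than as $\nabla G_\lambda(\theta^*(\lambda))=0$. Everything else is a direct assembly of strong concavity and weak duality.
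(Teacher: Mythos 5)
Your proposal is correct and follows essentially the same route as the paper: strong concavity of $G_\lambda$ (via $\alpha$-strong convexity of each $f_i^*$), the variational-inequality form of optimality at $\theta^*(\lambda)$ to kill the first-order term, and weak duality to replace $G_\lambda(\theta^*(\lambda))$ by $F_\lambda(B(\lambda),\textbf{t}(\lambda))$. The only cosmetic difference is that you route the last step through strong duality plus primal optimality, whereas the paper invokes the weak-duality inequality $G_\lambda(\theta^*(\lambda))\leq F_\lambda(B(\lambda),\textbf{t}(\lambda))$ directly; both yield the same bound.
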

\begin{proof}
 Because $f_{i}$ is differentiable and its gradient is  $\frac{1}{\alpha}$-Lipschitz continuous, its conjugate function  $f^{*}_{i}$ is $\alpha$-strongly convex (Hiriart-Urruty and Lemar$\acute{e}$chal \cite{H93}). So, $G_{\lambda}(\theta)$ is $n\alpha$-strongly concave, i.e.,
\begin{center}
$G_{\lambda}(\theta_{1})\leq G_{\lambda}(\theta_{2})+\langle\nabla G_{\lambda}(\theta_{2}), \theta_{2}-\theta_{1}\rangle-\frac{n\alpha}{2}\|\theta_{1}-\theta_{2}\|^{2}_{2}$
\end{center}
for any $\theta_{1}$, $\theta_{2}\in R^{n}$. Setting $\theta_{1}=\theta(\lambda)$ and $\theta_{2}=\theta^{*}(\lambda)$, we have
\begin{center}
$G_{\lambda}(\theta(\lambda))\leq G_{\lambda}(\theta^{*}(\lambda))+\langle\nabla G_{\lambda}(\theta^{*}(\lambda)), \theta(\lambda)-\theta^{*}(\lambda)\rangle-\frac{n\alpha}{2}\|\theta^{*}(\lambda)-\theta(\lambda)\|^{2}_{2}$.
\end{center}
 It is true that $\langle\nabla G_{\lambda}(\theta^{*}(\lambda)), \theta(\lambda)-\theta^{*}(\lambda)\rangle\leq0$, because $\theta^{*}(\lambda)$ is the solution of problem (3). So, the last inequality can be simplified as
\begin{center}
$G_{\lambda}(\theta(\lambda))\leq G_{\lambda}(\theta^{*}(\lambda))-\frac{n\alpha}{2}\|\theta^{*}(\lambda)-\theta(\lambda)\|^{2}_{2}$.
\end{center}
This, together with $G_{\lambda}(\theta^{*}(\lambda))\leq F_{\lambda}(B(\lambda),\textbf{t}(\lambda))$, suggests that
\begin{center}
$G_{\lambda}(\theta(\lambda))\leq F_{\lambda}(B(\lambda),\textbf{t}(\lambda))-\frac{n\alpha}{2}\|\theta^{*}(\lambda)-\theta(\lambda) \|^{2}_{2}$.
\end{center}
Therefore, the conclusion is proved. \qed
\end{proof}

 Lemma \ref{4.3} means that the dual solution $\theta^{*}(\lambda)$ is contained in the set $$\mathcal{B}=\left\{\theta\Big|\|\theta-\theta(\lambda)\|_{2}\leq\sqrt{\frac{2Gap(\lambda)}{n\alpha}}\right\}.$$
 According to Theorem \ref{4.2}, if
\begin{eqnarray}
\underset{\theta\in \mathcal{B}}\max ~{\sigma_{k}}{\left(\sum\limits_{i=1}^{n} \theta_{i}X_{i}\right)}<\lambda,
\end{eqnarray}
then $\sigma_{k}(B^{*}(\lambda))=0$. Combining Theorem \ref{4.2} and  Lemma \ref{4.3}, we obtain the following regularization parameter selection rule.
\begin{Theorem}\label{4.4}
Let $(B(\lambda), \textbf{t}(\lambda))$ and $\theta(\lambda)$ be any feasible points of problems (2) and (3), respectively. For any $k\in\{1,2,\cdots,r\}$, if
\begin{center}
$\lambda>\sigma_{k}\left(\sum\limits_{i=1}^{n} \theta_{i}(\lambda)X_{i}\right)+\sqrt{\frac{2Gap(\lambda)}{\alpha}\cdot\frac{\sum\limits_{i=1}^{n}\|X_{i}\|_{2}^{2}}{n}}$,
\end{center}
then $\sigma_{k}(B^{*}(\lambda))=0$. This leads to $\rm rank$$(B^{*}(\lambda))\leq k-1$.
\end{Theorem}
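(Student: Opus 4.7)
The plan is to combine Theorem~\ref{4.2} with Lemma~\ref{4.3} via a perturbation estimate on the $k$-th singular value of the matrix $M(\theta):=\sum_{i=1}^{n}\theta_i X_i$ as a function of $\theta$. Theorem~\ref{4.2} already delivers the desired rank bound whenever $\lambda>\sigma_k(M(\theta^*(\lambda)))$, so the task reduces to replacing the unknown dual optimum $\theta^*(\lambda)$ in that threshold by the computable feasible dual point $\theta(\lambda)$, at the cost of an additive error driven by the duality gap.

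First, I would apply Weyl's inequality for singular values, which yields
$$\sigma_k\bigl(M(\theta^*(\lambda))\bigr)\leq \sigma_k\bigl(M(\theta(\lambda))\bigr)+\bigl\|M(\theta^*(\lambda))-M(\theta(\lambda))\bigr\|_2.$$
Exploiting the linearity of $M$ in $\theta$ together with the triangle inequality for the spectral norm and Cauchy--Schwarz, the residual term is bounded by
$$\bigl\|M(\theta^*(\lambda))-M(\theta(\lambda))\bigr\|_2\leq \sum_{i=1}^{n}|\theta_i^*(\lambda)-\theta_i(\lambda)|\cdot\|X_i\|_2\leq \|\theta^*(\lambda)-\theta(\lambda)\|_2\cdot\sqrt{\sum_{i=1}^{n}\|X_i\|_2^2}.$$

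Next, I would invoke Lemma~\ref{4.3} to substitute $\|\theta^*(\lambda)-\theta(\lambda)\|_2\leq \sqrt{2\,Gap(\lambda)/(n\alpha)}$, absorbing the factor of $n$ into the sum so that the product becomes exactly the advertised correction $\sqrt{\frac{2\,Gap(\lambda)}{\alpha}\cdot\frac{\sum_{i=1}^{n}\|X_i\|_2^2}{n}}$. The hypothesis of the theorem then implies $\lambda>\sigma_k(M(\theta^*(\lambda)))$, so Theorem~\ref{4.2} applies and gives $\sigma_k(B^*(\lambda))=0$, whence $\mathrm{rank}(B^*(\lambda))\leq k-1$.

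The three ingredients (Weyl's inequality, Cauchy--Schwarz across the index $i$, and the gap-based dual distance bound of Lemma~\ref{4.3}) each contribute exactly one factor in the correction, so there is no serious obstacle. The one point worth highlighting is that it is the spectral-norm form of Weyl's inequality, $|\sigma_k(A)-\sigma_k(B)|\leq \|A-B\|_2$, that is needed here: this is what translates an $\ell_2$ perturbation of the dual variable into a perturbation of the individual singular values of $M(\theta)$, as opposed to a weaker bound involving, say, the Frobenius norm.
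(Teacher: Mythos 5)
Your proposal is correct and follows essentially the same route as the paper: the paper's proof also combines Theorem~\ref{4.2} with Lemma~\ref{4.3}, uses the singular value perturbation inequality $\sigma_k(A+B)\le\sigma_k(A)+\sigma_1(B)$, and bounds the spectral norm of $\sum_i\eta_iX_i$ by $\|\eta\|_2\sqrt{\sum_i\|X_i\|_2^2}$ via Cauchy--Schwarz. The only cosmetic difference is that the paper phrases the argument as a maximization of $\sigma_k$ over the dual ball $\mathcal{B}$ containing $\theta^*(\lambda)$, whereas you apply the perturbation bound directly at $\theta^*(\lambda)$; the estimates are identical.
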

\begin{proof}
For any regularization parameter $\lambda$ and feasible point $\theta(\lambda)$ of problem (3), Lemma 3.2 shows that the dual solution $\theta^{*}(\lambda)$ is contained in the set $\mathcal{B}=\left\{\theta\Big|\|\theta-\theta(\lambda)\|_{2}\leq\gamma\right\}$, where
$\gamma=\sqrt{\frac{2Gap(\lambda)}{n\alpha}}$. Setting $\eta=\theta-\theta(\lambda)$, we have
\begin{center}
$\underset{\theta\in \mathcal{B}}\max{~\sigma_{k}}\left(\sum\limits_{i=1}^{n}\theta_{i}X_{i}\right)
=\underset{\|\eta\|_{2}\leq\gamma}\max{\sigma_{k}}\left(\sum\limits_{i=1}^{n}\left(\theta_{i}(\lambda)+\eta_{i}\right)X_{i}\right).$
\end{center}
Following the inequalities of singular value in Roger \cite{R13} (see, page 454), we get
\begin{center}
$\sigma_{k}\left(\sum\limits_{i=1}^{n}\left(\theta_{i}(\lambda)+\eta_{i}\right)X_{i}\right)
\leq \sigma_{k}\left(\sum\limits_{i=1}^{n}\theta_{i}(\lambda)X_{i}\right)
+\sigma_{1}\left(\sum\limits_{i=1}^{n}\eta_{i}X_{i}\right).$
\end{center}
Then
\begin{align*}
\underset{\|\eta\|_{2}\leq \gamma}\max{\sigma_{k}}\left(\sum\limits_{i=1}^{n}\left(\theta_{i}(\lambda)+\eta_{i}\right)X_{i}\right)
&\leq \sigma_{k}\left(\sum\limits_{i=1}^{n}\theta_{i}(\lambda)X_{i}\right)+\underset{\|\eta\|_{2}\leq \gamma}\max{\sigma_{1}}\left(\sum\limits_{i=1}^{n}\eta_{i}X_{i}\right)\\
&=\sigma_{k}\left(\sum\limits_{i=1}^{n}\theta_{i}(\lambda)X_{i}\right)+\gamma \sqrt{\sum\limits_{i=1}^{n}\|X_{i}\|_{2}^{2}}.
\end{align*}
The desired result followed from Lemma 3.2 and the argument after it. \qed
\end{proof}

Clearly, Theorem \ref{4.4} degrades to Theorem \ref{4.2}, when $Gap(\lambda)=0$ and $\mathcal{B}=\{\theta^{*}(\lambda)\}$. If we want the rank of the solution of NRM (1) is no more than a given constant, Theorem \ref{4.4} implies that the regularization parameter $\lambda$ can be decided by feasible points of problems (2) and (3). Comparing to Theorem \ref{4.2}, this result is more easy to implement.

Actually, one can choose any feasible points of  problems (2) and (3) to obtain $Gap(\lambda)$. For instance, we give the following approach to obtain primal and dual feasible points. Let $\theta(\lambda)=\frac{\lambda\cdot \theta^{max}}{\lambda_{max}}$ with $\lambda\in(0,\lambda_{max}]$. Clearly, $\theta(\lambda)$ is  a feasible point of problem (3). Moreover, it is easy to check that
\begin{center}
$B(\lambda)=\frac{\sum\limits_{i=1}^{n}\theta_{i}(\lambda)X_{i}}{\lambda}=
\frac{\sum\limits_{i=1}^{n}\theta_{i}^{max}X_{i}}{\lambda_{max}}$,
\end{center}
 and $t_{i}(\lambda)=y_{i}-\langle X_{i},B(\lambda)\rangle$, ($i=1,2,\cdots,n$) are feasible points of problem (2). Clearly, these feasible points are related to $\theta^{max}$. Based on the definition of $\theta^{max}$, whether these feasible points can help to choose better regularization parameter depends on the specific form of $f_{i}$.

In next sections, we give two specific forms of $f_{i}$, which are least square function and Huber function. We show the efficient regularization parameter selection results based on this type feasible points.
 \section{Two specific applications: LS-NRM and H-NRM}
\label{sec:4}
Section 3 gives the novel regularization parameter selection rule for the general nuclear norm regularized minimization problem. This section illustrates the applications of this rule on nuclear norm regularized least square minimization (LS-NRM) and nuclear norm regularized Huber minimization (H-NRM).
\subsection{Regularization parameter selection rule for LS-NRM}
\label{sec:4.1}
It is well known that the nuclear norm regularized least square minimization (LS-NRM) is a popular method to recovery the low rank matrix. The model is given as
\begin{eqnarray}
\underset{B\in R^{p\times q}}\min\left\{\frac{1}{2}\sum\limits_{i=1}^{n}\left(y_{i}-\langle X_{i}, B\rangle\right)^{2}+\lambda\|B\|_{*}\right\},
\end{eqnarray}
Here, we use the notation $B^{*(ls)}(\lambda)$ as the solution of (7). Corresponding to problem (1), it is easy to show that $f_{i}(\mu)=\frac{1}{2} \mu^{2}$. Clearly, it is a differentiable function with 1-Lipschitz continuous gradient. Interestingly, the dual form of (7) is a projection problem on a convex area, which is given as
\begin{equation}\label{eq19}
\begin{split}
&\underset{\theta \in R^{n}}\max \left\{\langle \textbf{y}, \theta\rangle-\frac{1}{2}\|\theta\|_{2}^{2}\right\}\\
&s.t.\quad\left\|\sum\limits_{i=1}^{n} \theta_{i}X_{i}\right\|_{2}\leq \lambda.
\end{split}
\end{equation}
%Thus, the KKT system of problems (7) and (8) is
%\begin{eqnarray}
%\begin{cases}
%\sum\limits_{i=1}^{n} \theta_{i}X_{i}\in \lambda\partial\|B\|_{*},\\
% y_{i}-\langle X_{i}, B\rangle-\theta_{i}=0, \quad i=1,2,\cdots,n.
%\end{cases}
%\end{eqnarray}
Let
$\theta^{max(ls)}=\underset{\theta \in R^{n}}{\arg\max}\left\{\langle \textbf{y}, \theta\rangle-\frac{1}{2}\|\theta\|_{2}^{2}\right\}=\textbf{y}$ and $\lambda^{(ls)}_{max}=\left\|\sum\limits_{i=1}^{n}y_{i}X_{i}\right\|_{2}$.
From Lemma 3.1, $\lambda^{(ls)}_{max}$ is the lower bound of the regularization parameter such that the solution of problem (7) is zero, and $\theta^{max(ls)}$ is its dual solution with the regularization parameter $\lambda^{(ls)}_{max}$. Then, for any $\lambda\in (0,\lambda^{(ls)}_{max}]$, we can easily give the feasible points of problems (7) and (8).
\begin{center}
 $B^{(ls)}_{0}(\lambda)=\frac{\sum\limits_{i=1}^{n}y_{i}X_{i}}{\lambda^{(ls)}_{max}}$,
 $\theta^{(ls)}_{0}(\lambda)=\frac{\lambda\textbf{y}}{\lambda^{(ls)}_{max}}$.
 \end{center}
Therefore, the dual solution is contained in the set $$\left\{\theta\Big|\|\theta-\theta^{(ls)}_{0}(\lambda)\|_{2}\leq \sqrt{\frac{2Gap^{(ls)}(\lambda)}{n}}\right\},$$
where
$Gap^{(ls)}(\lambda)$ is
\begin{center}
$\frac{1}{2}\sum\limits_{i=1}^{n}\left(y_{i}-\frac{\langle X_{i}, \sum\limits_{i=1}^{n}y_{i}X_{i}\rangle}{\lambda^{(ls)}_{max}}\right)^{2}
+\frac{\lambda}{\lambda^{(ls)}_{max}}\left\|\sum\limits_{i=1}^{n}y_{i}X_{i}\right\|_{*}
-\left(\frac{\lambda}{\lambda^{(ls)}_{max}}-\frac{1}{2}(\frac{\lambda}{\lambda^{(ls)}_{max}})^{2}\right)\|\textbf{y}\|_{2}^{2}.$
\end{center}

In order to show the closed form of the regularization parameters,  we define some new notations.
%$\lambda^{(ls)}_{0}=\lambda^{(ls)}_{max}, \quad a_{k}^{(ls)}=n\frac{\left(\lambda^{(ls)}_{max}-\sigma_{k}\left(\sum\limits_{i=1}^{n}y_{i}X_{i}\right)\right)^2}{\left(\lambda^{(ls)}_{max}\right)^{2}\cdot\sum\limits_{i=1}^{n}{\|X_{i}\|^{2}_{2}}}
%-\left(\frac{\|\textbf{y}\|_{2}}{\lambda^{(ls)}_{max}}\right)^{2},$\\ $b^{(ls)}=\frac{\left\|\sum\limits_{i=1}^{n}y_{i}X_{i}\right\|_{*}-\|\textbf{y}\|_{2}^{2}}{\lambda^{(ls)}_{max}},\quad c^{(ls)}=\sum\limits_{i=1}^{n}\left(y_{i}-\frac{1}{\lambda^{(ls)}_{max}}\left\langle X_{i},\sum\limits_{i=1}^{n}y_{i}X_{i}\right\rangle\right)^{2}, $\\
%$\Delta^{(ls)}_{k}=\widetilde{\sqrt{(b^{(ls)})^{2}+a_{k}^{(ls)}c^{(ls)}}}\\
%~~~~~~~=\begin{cases}\sqrt{(b^{(ls)})^{2}+a_{k}^{(ls)}c^{(ls)}}, &\sqrt{(b^{(ls)})^{2}+a_{k}^{(ls)}c^{(ls)}}\geq0\\
%\emptyset, &\rm{otherwise}.\end{cases}$
\begin{equation*}
\begin{aligned}
&&\lambda^{(ls)}_{0}&=\lambda^{(ls)}_{max},\\ &&a_{k}^{(ls)}&=n\frac{\left(\lambda^{(ls)}_{max}-\sigma_{k}
\left(\sum\limits_{i=1}^{n}y_{i}X_{i}\right)\right)^2}{\left(\lambda^{(ls)}_{max}\right)^{2}\cdot\sum\limits_{i=1}^{n}{\|X_{i}\|^{2}_{2}}}
-\left(\frac{\|\textbf{y}\|_{2}}{\lambda^{(ls)}_{max}}\right)^{2},\\ &&b^{(ls)}&=\frac{\left\|\sum\limits_{i=1}^{n}y_{i}X_{i}\right\|_{*}-\|\textbf{y}\|_{2}^{2}}{\lambda^{(ls)}_{max}},\\ &&c^{(ls)}&=\sum\limits_{i=1}^{n}\left(y_{i}-\frac{1}{\lambda^{(ls)}_{max}}\left\langle X_{i},\sum\limits_{i=1}^{n}y_{i}X_{i}\right\rangle\right)^{2},\\
&&d^{(ls)}&=\lambda^{(ls)}_{max}-\frac{\|\textbf{y}\|_{2}\cdot \sqrt{\sum\limits_{i=1}^{n}\|X_{i}\|_{2}^{2}}}{\sqrt{n}}\\
&&\Delta^{(ls)}_{k}&=\begin{cases}\sqrt{(b^{(ls)})^{2}+a_{k}^{(ls)}c^{(ls)}}, &\sqrt{(b^{(ls)})^{2}+a_{k}^{(ls)}c^{(ls)}}\geq0\\
\emptyset, &\rm{otherwise}.\end{cases}
\end{aligned}
\end{equation*}
We are ready to give the regularization parameter selection rule for LS-NRM.
\begin{Theorem}\label{4.5}
 Let $k \in \{1, 2,\cdots,r\}$. If $\lambda\in\left(\lambda^{(ls)}_{k},\lambda^{(ls)}_{k-1}\right]$, the solution of LS-NRM (7) satisfies that
 $\rm{rank}$$\left(B^{*(ls)}(\lambda)\right)\leq k-1$.
Here, the sequence of regularization parameters $\{\lambda^{(ls)}_{k}\}_{k=1}^{r}$ is given as
 \begin{equation}
\lambda^{(ls)}_{k}=
\begin{cases}
\max\left\{0,\frac{b^{(ls)}+\Delta^{(ls)}_{k}}{a_{k}^{(ls)}}\right\}, &~ \sigma_{k}\left(\sum\limits_{i=1}^{n}y_{i}X_{i}\right)<d^{(ls)}\\
\begin{cases}
\frac{c^{(ls)}}{-2b^{(ls)}}, &b^{(ls)}<0,\\
\emptyset, &\rm{otherwise},
\end{cases} & ~\sigma_{k}\left(\sum\limits_{i=1}^{n}y_{i}X_{i}\right)=d^{(ls)}\\
\left(\lambda^{l^{(1)}}_{k},\lambda^{u^{(1)}}_{k}\right)\cap(0,+\infty),
&~ \sigma_{k}\left(\sum\limits_{i=1}^{n}y_{i}X_{i}\right)>d^{(ls)}
\end{cases}
\end{equation}
where $\lambda^{l^{(1)}}_{k}=\frac{b^{(ls)}+\Delta^{(ls)}_{k}}{a_{k}^{(ls)}}$, and
$\lambda^{u^{(1)}}_{k}=\frac{b^{(ls)}-\Delta^{(ls)}_{k}}{a_{k}^{(ls)}}$.
%(i) If $\sigma_{k}\left(\sum\limits_{i=1}^{n}y_{i}X_{i}\right)<\lambda^{(ls)}_{max}-\frac{\|\textbf{y}\|_{2}\cdot \sqrt{\sum\limits_{i=1}^{n}\|X_{i}\|_{2}^{2}}}{\sqrt{n}}$,
%\begin{center}
%$\lambda^{(ls)}_{k}=\max\left\{0,\frac{b^{(ls)}+\Delta^{(ls)}_{k}}{a_{k}^{(ls)}}\right\}.$
%\end{center}
%(ii) If $\sigma_{k}\left(\sum\limits_{i=1}^{n}y_{i}X_{i}\right)=\lambda^{(ls)}_{max}-\frac{\|\textbf{y}\|_{2}\cdot \sqrt{\sum\limits_{i=1}^{n}\|X_{i}\|_{2}^{2}}}{\sqrt{n}}$,
%\begin{center}
%$\lambda^{(ls)}_{k}=\begin{cases}
%\frac{c^{(ls)}}{-2b^{(ls)}}, &b^{(ls)}<0;\\
%\emptyset, &\rm{otherwise}.
%\end{cases}$
%\end{center}
%(iii) If $\sigma_{k}\left(\sum\limits_{i=1}^{n}y_{i}X_{i}\right)>\lambda^{(ls)}_{max}-\frac{\|\textbf{y}\|_{2}\cdot \sqrt{\sum\limits_{i=1}^{n}\|X_{i}\|_{2}^{2}}}{\sqrt{n}}$,
%\begin{center}
%$\lambda^{(ls)}_{k}\in\left(\lambda^{l^{(1)}}_{k},\lambda^{u^{(1)}}_{k}\right)\cap(0,+\infty),$
%\end{center}
%where $\lambda^{l^{(1)}}_{k}=\frac{b^{(ls)}+\Delta^{(ls)}_{k}}{a_{k}^{(ls)}}$, and
%$\lambda^{u^{(1)}}_{k}=\frac{b^{(ls)}-\Delta^{(ls)}_{k}}{a_{k}^{(ls)}}$.
\end{Theorem}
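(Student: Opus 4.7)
The plan is to specialize Theorem~\ref{4.4} to LS-NRM by inserting the Lipschitz constant $\alpha=1$ (for the gradient of $f_i(\mu)=\tfrac12\mu^2$) together with the explicit pair $(B_0^{(ls)}(\lambda),\theta_0^{(ls)}(\lambda))$ introduced right after Lemma~\ref{4.3}. Feasibility of $\theta_0^{(ls)}(\lambda)$ for the dual (8) follows because it is merely $\theta^{max(ls)}=\textbf{y}$ rescaled by $\lambda/\lambda^{(ls)}_{max}\in(0,1]$, which shrinks the spectral-norm ball of radius $\lambda^{(ls)}_{max}$ to one of radius $\lambda$; and $B_0^{(ls)}(\lambda)$ is trivially a primal feasible point. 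Because $\theta_0^{(ls)}(\lambda)$ is a positive scalar multiple of $\textbf{y}$, one immediately obtains the identity $\sigma_k\bigl(\sum_i \theta^{(ls)}_{0,i}(\lambda) X_i\bigr)=(\lambda/\lambda^{(ls)}_{max})\,\sigma_k\bigl(\sum_i y_i X_i\bigr)$.

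Next I would expand the duality gap; grouping constant, linear, and quadratic terms in $\lambda$ and invoking the definitions of $b^{(ls)}$ and $c^{(ls)}$, it collapses to
\begin{equation*}
Gap^{(ls)}(\lambda)=\tfrac{1}{2}c^{(ls)}+b^{(ls)}\lambda+\frac{\|\textbf{y}\|_2^2}{2(\lambda^{(ls)}_{max})^2}\lambda^2.
\end{equation*}
Substituting this and the singular-value identity into the bound of Theorem~\ref{4.4}, moving the $\sigma_k$-term to the left so that the left side equals $\lambda\cdot(\lambda^{(ls)}_{max}-\sigma_k(\sum y_i X_i))/\lambda^{(ls)}_{max}$, and noting that this quantity is positive whenever $\sigma_k(\sum y_i X_i)<\lambda^{(ls)}_{max}$, I can square and multiply through by $n/\sum_i\|X_i\|_2^2$. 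The $\lambda^2$-contribution from the gap then partially cancels the one on the left, and by the very definition of $a_k^{(ls)}$ the remaining condition telescopes into the scalar quadratic inequality
\begin{equation*}
a_k^{(ls)}\lambda^2-2b^{(ls)}\lambda-c^{(ls)}>0.
\end{equation*}

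The main obstacle is the case analysis required to invert this quadratic into the piecewise formula for $\lambda_k^{(ls)}$. The sign of $a_k^{(ls)}$ is controlled precisely by how $\sigma_k(\sum y_i X_i)$ compares with $d^{(ls)}$, so I would split into three subcases. If $\sigma_k(\sum y_i X_i)<d^{(ls)}$, the leading coefficient is positive, the parabola opens upward, and the relevant positive solution set is the half-line $\lambda>\max\{0,(b^{(ls)}+\Delta_k^{(ls)})/a_k^{(ls)}\}$. If $\sigma_k(\sum y_i X_i)=d^{(ls)}$, the quadratic degenerates to a linear inequality $-2b^{(ls)}\lambda>c^{(ls)}$, which admits a positive-$\lambda$ solution only when $b^{(ls)}<0$, yielding the single threshold $c^{(ls)}/(-2b^{(ls)})$ (and no valid $\lambda$ otherwise). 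If $\sigma_k(\sum y_i X_i)>d^{(ls)}$, the parabola opens downward and the positive solution set is the bounded open interval $(\lambda_k^{l^{(1)}},\lambda_k^{u^{(1)}})\cap(0,+\infty)$, which is nonempty exactly when the discriminant $(b^{(ls)})^2+a_k^{(ls)}c^{(ls)}\geq 0$ holds, explaining why $\Delta_k^{(ls)}$ is declared empty otherwise. Matching each solution set with the stated formula for $\lambda_k^{(ls)}$ and appealing to Theorem~\ref{4.4} yields $\sigma_k(B^{*(ls)}(\lambda))=0$, hence $\mathrm{rank}(B^{*(ls)}(\lambda))\leq k-1$, on every $\lambda\in(\lambda_k^{(ls)},\lambda_{k-1}^{(ls)}]$.
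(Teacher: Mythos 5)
Your proposal is correct and follows essentially the same route as the paper: it specializes Theorem~\ref{4.4} with $\alpha=1$ and the feasible pair $(B_0^{(ls)}(\lambda),\theta_0^{(ls)}(\lambda))$, reduces the resulting condition to the quadratic inequality $a_k^{(ls)}\lambda^2-2b^{(ls)}\lambda-c^{(ls)}>0$, and performs the same three-way case analysis on the sign of $a_k^{(ls)}$ via its equivalence with the comparison of $\sigma_k\left(\sum_i y_iX_i\right)$ to $d^{(ls)}$. You in fact supply more of the intermediate algebra (the explicit quadratic expansion of $Gap^{(ls)}(\lambda)$ and the justification for squaring) than the paper does.
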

\begin{proof}
From Theorem \ref{4.4}, we can get that if $\lambda$ satisfies that
\begin{center}
$\lambda>\frac{\lambda}{\lambda_{max}}\sigma_{k}\left(\sum\limits_{i=1}^{n} y_{i}X_{i}\right)
+\sqrt{\sum\limits_{i=1}^{n}\|X_{i}\|_{2}^{2}}\cdot\sqrt{\frac{2Gap^{(ls)}(\lambda)}{n}},$
\end{center}
then $\rm{rank}$$\left(B^{*(ls)}(\lambda)\right)\leq k-1$. We simplify the above condition as
$$a_{k}^{(ls)}\lambda^{2}-2b^{(ls)}\lambda-c^{(ls)}>0,$$
which is a quadratic inequality of $\lambda$. In order to solve it, we consider the following three cases.

(i) For $a_{k}^{(ls)}>0$, which equals to $\sigma_{k}\left(\sum\limits_{i=1}^{n}y_{i}X_{i}\right)<d^{(ls)}_{max}$,
 $\lambda$ needs to satisfy that $$\lambda>\max\left\{0,\frac{b^{(ls)}+\Delta^{(ls)}_{k}}{a_{k}^{(ls)}}\right\}.$$

(ii) For $a_{k}^{(ls)}=0$, i.e., $\sigma_{k}\left(\sum\limits_{i=1}^{n}y_{i}X_{i}\right)=d^{(ls)}$,
 $\lambda$ needs to satisfy that
$\lambda>
\frac{c^{(ls)}}{-2b^{(ls)}}$ when $b^{(ls)}<0$, otherwise, it is meaningless.

(iii) For $a_{k}^{(ls)}<0$, which equals to
$\sigma_{k}\left(\sum\limits_{i=1}^{n}y_{i}X_{i}\right)>d^{(ls)}$,
 $\lambda$ needs to satisfy that $\lambda>0$ and
\begin{center}
 $\frac{b^{(ls)}+\Delta^{(ls)}_{k}}{a_{k}^{(ls)}}<\lambda<\frac{b^{(ls)}-\Delta^{(ls)}_{k}}{a_{k}^{(ls)}}.$
\end{center} \qed
Combing the above arguments and Theorem 3.2, we get the desired result.
\end{proof}

Theorem \ref{4.5} mainly state that  we can get a sequence of regularization parameters of problem (7) and the rank of the solution is bounded in the corresponding intervals. Note that the formula of the regularization parameter $\lambda_{k}$ is computed by three cases in above theorem. It is worth to mention that $a^{(ls)}_{k}$ is a nondecrease function of $k$ and $a^{(ls)}_{k}=0$ is rarely happened in practice.

As in Section 2, the nuclear norm regularized least square minimization is an efficient tool when
the noise has zero mean and constant variance. However, we do not know the distribution of the noise. To deal with this case, the nuclear norm regularized Huber minimization (H-NRM) is becoming attractive recently. In the proceed section, we focus on the regularization parameter selection rule for H-NRM.
\subsection{Regularization parameter selection rule for H-NRM}
\label{sec:4.2}
 It is well known that the Huber loss function is very important in statistics and machine learning (see, e.g., Huber \cite{H73}, Sun et al. \cite{S17}, Elsener and Geer \cite{E18} and Chen et al.\cite{C}), which is defined as
\begin{eqnarray*}
h_{\kappa}(t)=\begin{cases}
\frac{1}{2}t^{2}, &|t|\leq\kappa\\
\kappa|t|-\frac{1}{2}\kappa^{2}, &|t|>\kappa.
\end{cases}
\end{eqnarray*}
Clearly, the Huber  function is differentiable and the gradient is 1-Lipschitz continuous. Moreover, we give the conjugate function of the Huber function,
\begin{eqnarray*}
h_{\kappa}^{*}(\xi)=\underset{t\in R}\max\left\{t\xi-h_{\kappa}(t)\right\}
=\begin{cases}
\frac{1}{2}\xi^{2}, &0\leq|\xi|\leq\kappa\\
+\infty, &|\xi|>\kappa.
\end{cases}
\end{eqnarray*}
The nuclear norm regularized Huber minimization (H-NRM) is given as
\begin{eqnarray}
\underset{B\in R^{p\times q}}\min\left\{\sum\limits_{i=1}^{n}h_{\kappa}(y_{i}-\langle X_{i},B\rangle)+\lambda\|B\|_{*}\right\},
\end{eqnarray}
The solution of problem (10) is denoted as $B^{*(h)}(\lambda)$. As a result, the dual form of problem (10) is
\begin{equation}\label{eq16}
\begin{split}
&\underset{\theta\in R^{n}}\max\left\{\langle \textbf{ y},\theta\rangle-\frac{1}{2}\|\theta\|_{2}^{2}\right\}\\
&s.t.\quad\left\|\sum\limits_{i=1}^{n} \theta_{i}X_{i}\right\|_{2}\leq \lambda,\\
&\quad\quad -\kappa\textbf{1}\leq \theta \leq \kappa\textbf{1}.
\end{split}
\end{equation}
%The KKT system of problems (10) is
%\begin{eqnarray}
%\begin{cases}
%\sum\limits_{i=1}^{n} \theta_{i}X_{i}\in \lambda\partial\|B\|_{*},\\
%\theta_{i}=\nabla h_{\kappa}(z_{i}),\\
% y_{i}-\langle X_{i}, B\rangle-z_{i}=0,\\
% -\kappa\leq\theta_{i}\leq\kappa, \quad i=1,2,\cdots,n.
%\end{cases}
%\end{eqnarray}
In a similar way in Section 4.1 for LS-NRM, we will give feasible points of problems (10) and (11), and present the sequence of regularization parameters of H-NRM. Note that when the solution of problem (10) is zero, its dual solution $\theta^{max(h)}$ is
 $$\theta^{max(h)}=\underset{\theta\in R^{n}}{\arg\max}\left\{\langle \theta,\textbf{y}\rangle-\frac{1}{2}\|\theta\|_{2}^{2}\Big| -\kappa\leq\theta_{i}\leq\kappa,  i=1,2,\cdots,n.\right\}.$$
 The closed-form of $\theta_{i}^{max(h)}$ is easily yielded as
\begin{eqnarray*}
\theta_{i}^{max(h)}=\begin{cases}
y_{i}, &|y_{i}|\leq \kappa,\\
sgn(y_{i})\kappa, &|y_{i}|>\kappa, \quad  i=1,2,\cdots, n.
\end{cases}
\end{eqnarray*}
Let $\tau_{i}=y_{i}\cdot\delta_{|\cdot|\leq\kappa}(y_{i})
+sgn(y_{i})\cdot\kappa\cdot\delta_{|\cdot|>\kappa}(y_{i})$ and $\tau=(\tau_{1},\cdots,\tau_{n})^{T}$. Then $\theta^{max(h)}=\tau$ and the lower bound of regularization parameter that enforces the solution of problem (10) being zero is $$\lambda^{(h)}_{max}=\left\|\sum\limits_{i=1}^{n}{\theta_{i}^{max(h)}X_{i}}\right\|_{2}=\left\|\sum\limits_{i=1}^{n}{\tau_{i}X_{i}}\right\|_{2}. $$
For any $0<\lambda<\lambda^{(h)}_{max}$, the feasible points of problems (10) and (11) are set as follows.
 \begin{center}
 $\theta^{(h)}_{0}(\lambda)=\frac{\lambda \theta^{max(h)}}{\lambda^{(h)}_{max}}=\frac{\lambda\tau}{\lambda^{(h)}_{max}}$ and $B^{(h)}_{0}(\lambda)=\frac{\sum\limits_{i=1}^{n}\theta_{i}^{0(h)}(\lambda)X_{i}}{\lambda}
 =\frac{\sum\limits_{i=1}^{n}\tau_{i}X_{i}}{\lambda^{(h)}_{max}}.$
 \end{center}
Therefore, the dual solution is contained in the set $$\left\{\theta\Big|\|\theta-\theta^{(h)}_{0}(\lambda)\|_{2}\leq \sqrt{\frac{2Gap^{(h)}(\lambda)}{n}}\right\},$$
where $Gap^{(h)}(\lambda)$ is\\
$\sum\limits_{i=1}^{n}h_{\kappa}(y_{i}-\frac{\langle X_{i}, \sum_{i=1}^{n}\tau_{i}X_{i}\rangle}{\lambda^{(h)}_{max}})
+\frac{\lambda}{\lambda^{(h)}_{max}}\left\|\sum\limits_{i=1}^{n}\tau_{i}X_{i}\right\|_{*}
-(\frac{\lambda}{\lambda^{(h)}_{max}}\langle\textbf{y},\tau\rangle-\frac{1}{2}(\frac{\lambda}{\lambda^{(h)}_{max}})^{2}\|\tau\|_{2}^{2})$. Moreover, we define some new notations.
\begin{equation*}
 \begin{aligned}
\lambda^{(h)}_{0}&=\lambda^{(h)}_{max}, \\ a_{k}^{(h)}&=n\frac{\left(\lambda^{(h)}_{max}-\sigma_{k}\left(\sum\limits_{i=1}^{n}\tau_{i}X_{i}\right)\right)^2}{\left(\lambda^{(h)}_{max}\right)^{2}\cdot\sum\limits_{i=1}^{n}{\|X_{i}\|^{2}_{2}}}
-\left(\frac{\|\tau\|_{2}}{\lambda^{(h)}_{max}}\right)^{2}, \\ b^{(h)}&=\frac{\left\|\sum\limits_{i=1}^{n}\tau_{i}X_{i}\right\|_{*}-\|\tau\|_{2}^{2}}{\lambda^{(h)}_{max}}\\ c^{(h)}&=\sum\limits_{i=1}^{n}\left(\tau_{i}-\frac{1}{\lambda^{(h)}_{max}}\left\langle X_{i},\sum\limits_{i=1}^{n}\tau_{i}X_{i}\right\rangle\right)^{2},\\
d^{(h)}&=\lambda^{(h)}_{max}-\frac{\|\tau\|_{2}\cdot \sqrt{\sum\limits_{i=1}^{n}\|X_{i}\|_{2}^{2}}}{\sqrt{n}},\\
\Delta^{(h)}_{k}&=\begin{cases}\sqrt{(b^{(h)})^{2}+a_{k}^{(h)}c^{(h)}}, &\sqrt{(b^{(h)})^{2}+a_{k}^{(h)}c^{(h)}}\geq0\\
\emptyset, &\rm{otherwise}.\end{cases}
\end{aligned}
\end{equation*}
\begin{Theorem}%\label{4.5}
 Let $k \in \{1, 2,\cdots,r\}$. If  $\lambda\in\left(\lambda^{(h)}_{k},\lambda^{(h)}_{k-1}\right]$, the solution of H-NRM (10) satisfies that $\rm{rank}$$\left(B^{*(h)}(\lambda)\right)\leq k-1$.  Here, the sequence of regularization parameters $\{\lambda^{(h)}_{k}\}_{k=1}^{r}$ is given as
 \begin{equation*}
\lambda^{(h)}_{k}=
\begin{cases}
\max\left\{0,\frac{b^{(h)}+\Delta^{(h)}_{k}}{a_{k}^{(h)}}\right\}, &~ \sigma_{k}\left(\sum\limits_{i=1}^{n}\tau_{i}X_{i}\right)<d^{(h)}\\
\begin{cases}
\frac{c^{(h)}}{-2b^{(h)}}, &b^{(h)}<0,\\
\emptyset, &\rm{otherwise},
\end{cases} & ~\sigma_{k}\left(\sum\limits_{i=1}^{n}
\tau_{i}X_{i}\right)=d^{(h)}\\
\left(\lambda^{l^{(2)}}_{k},\lambda^{u^{(2)}}_{k}\right)\cap(0,+\infty),
&~ \sigma_{k}\left(\sum\limits_{i=1}^{n}\tau_{i}X_{i}\right)>d^{(h)}
\end{cases}
\end{equation*}
where $\lambda^{l^{(2)}}_{k}=\frac{b^{(h)}+\Delta^{(h)}_{k}}{a_{k}^{(h)}}$, and
$\lambda^{u^{(2)}}_{k}=\frac{b^{(h)}-\Delta^{(h)}_{k}}{a_{k}^{(h)}}$.
\end{Theorem}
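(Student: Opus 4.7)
The plan is to mirror the proof of Theorem \ref{4.5} with the Huber-specific feasible pair $(B_0^{(h)}(\lambda),\textbf{t}_0^{(h)}(\lambda))$ and $\theta_0^{(h)}(\lambda)$ constructed above playing the role that $(B_0^{(ls)}(\lambda),\textbf{t}_0^{(ls)}(\lambda))$ and $\theta_0^{(ls)}(\lambda)$ played in the least-squares case. The starting point is Theorem \ref{4.4}: since the Huber loss $h_\kappa$ has a $1$-Lipschitz gradient we take $\alpha=1$, and the theorem says that whenever
\[
\lambda > \sigma_k\Bigl(\sum_{i=1}^n \theta_{0,i}^{(h)}(\lambda)X_i\Bigr) + \sqrt{2\,Gap^{(h)}(\lambda)}\cdot\sqrt{\frac{\sum_{i=1}^n \|X_i\|_2^2}{n}},
\]
one has $\sigma_k(B^{*(h)}(\lambda))=0$ and therefore $\mathrm{rank}(B^{*(h)}(\lambda))\le k-1$.

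Before invoking this bound I would verify feasibility of the proposed points in (10) and (11). Primal feasibility is immediate from $t_{0,i}^{(h)}(\lambda)=y_i-\langle X_i,B_0^{(h)}(\lambda)\rangle$. For the dual, the spectral constraint is tight because $\|\sum_i\theta_{0,i}^{(h)}(\lambda)X_i\|_2=(\lambda/\lambda_{\max}^{(h)})\|\sum_i\tau_i X_i\|_2=\lambda$, and the box constraint $|\theta_{0,i}^{(h)}(\lambda)|\le\kappa$ follows from $|\tau_i|\le\kappa$ combined with $\lambda\le\lambda_{\max}^{(h)}$.

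Next, I would substitute these definitions to turn the bound of Theorem \ref{4.4} into a quadratic inequality in $\lambda$. Using $\sigma_k(\sum_i\theta_{0,i}^{(h)}(\lambda)X_i)=(\lambda/\lambda_{\max}^{(h)})\sigma_k(\sum_i\tau_i X_i)$ together with the closed form of $Gap^{(h)}(\lambda)$ listed just above the theorem, the sufficient condition rearranges into
\[
a_k^{(h)}\lambda^2 - 2 b^{(h)}\lambda - c^{(h)} > 0,
\]
which has exactly the same shape as the inequality in the proof of Theorem \ref{4.5} under the substitution $\textbf{y}\mapsto\tau$. The piecewise formula for $\lambda_k^{(h)}$ then falls out of the usual three-way case split on the sign of $a_k^{(h)}$, equivalently on the comparison of $\sigma_k(\sum_i\tau_i X_i)$ with $d^{(h)}$: positive $a_k^{(h)}$ forces $\lambda$ above the larger root; vanishing $a_k^{(h)}$ collapses the inequality to a linear one, solvable only when $b^{(h)}<0$; negative $a_k^{(h)}$ yields the open interval $(\lambda_k^{l^{(2)}},\lambda_k^{u^{(2)}})\cap(0,+\infty)$. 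Nonnegativity of the discriminant is already built into the definition of $\Delta_k^{(h)}$. Combining the three branches with Theorem \ref{4.2} delivers the claimed rank bound on $(\lambda_k^{(h)},\lambda_{k-1}^{(h)}]$.

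The delicate step is reconciling the Huber conjugate with the LS computation: $h_\kappa^\ast(\xi)=\tfrac{1}{2}\xi^2$ holds only on the box $[-\kappa,\kappa]$, but this is precisely the domain imposed by the dual constraint in (11), so the dual objective in (11) matches the LS dual and the strong-concavity constant $n\alpha=n$ is inherited unchanged. Once this bookkeeping is settled, the rest is a mechanical translation of the proof of Theorem \ref{4.5} with $\textbf{y}$ replaced by $\tau$, and no genuinely new difficulty appears.
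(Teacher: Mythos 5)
The paper states this theorem without any proof, so there is no argument to compare against line by line; your plan --- rerun the proof of Theorem 4.1 with the Huber feasible pair, $\alpha=1$, and $Gap^{(h)}(\lambda)$ --- is clearly the route the authors intend, and your explicit checks (dual feasibility of $\theta^{(h)}_{0}(\lambda)$ via $|\tau_i|\le\kappa$ and $\lambda\le\lambda^{(h)}_{max}$, and the identification $h_\kappa^{*}(\xi)=\tfrac12\xi^2$ on the dual box so that Lemma 3.2 applies with constant $n\alpha=n$) are correct and are worth writing out, since the paper omits them.

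The gap is in the sentence claiming the sufficient condition ``rearranges into $a_k^{(h)}\lambda^2-2b^{(h)}\lambda-c^{(h)}>0$ \dots under the substitution $\textbf{y}\mapsto\tau$.'' If you actually square
$\lambda\bigl(1-\sigma_k(\sum_i\tau_iX_i)/\lambda^{(h)}_{max}\bigr)>\sqrt{2\,Gap^{(h)}(\lambda)\cdot\sum_i\|X_i\|_2^2/n}$
using the displayed $Gap^{(h)}(\lambda)$, the $\lambda^2$ coefficient is indeed $a_k^{(h)}$ (the term $\tfrac12\|\theta^{(h)}_0(\lambda)\|_2^2$ really does produce $\|\tau\|_2^2$), but the linear coefficient comes out as $\bigl(\|\sum_i\tau_iX_i\|_*-\langle\textbf{y},\tau\rangle\bigr)/\lambda^{(h)}_{max}$ and the constant as $2\sum_ih_\kappa\bigl(y_i-\langle X_i,B^{(h)}_0(\lambda)\rangle\bigr)$. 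These coincide with $b^{(h)}$ and $c^{(h)}$ only when every $|y_i|\le\kappa$ and every residual $y_i-\langle X_i,B^{(h)}_0(\lambda)\rangle$ lies in $[-\kappa,\kappa]$; otherwise $\langle\textbf{y},\tau\rangle>\|\tau\|_2^2$ and $2h_\kappa(y_i-u)\neq(\tau_i-u)^2$. The substitution $\textbf{y}\mapsto\tau$ is legitimate in the dual objective but not in the primal Huber loss nor in the bilinear term $\langle\textbf{y},\theta^{(h)}_0(\lambda)\rangle$. Nor can you rescue the statement by a domination argument: the two discrepancies do not both point in the conservative direction (one has $b^{(h)}\ge\tilde b$, which helps, but $c^{(h)}\ge\tilde c$ can fail, e.g.\ when $y_i>\kappa$ and $\langle X_i,B^{(h)}_0(\lambda)\rangle$ is near $\kappa$), so positivity of the stated quadratic need not imply positivity of the true one. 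You must either replace $b^{(h)},c^{(h)}$ by the correct coefficients above, or supply an extra argument covering the case where some $|y_i|>\kappa$; as written the step fails precisely in the heavy-tailed regime that motivates using the Huber loss.
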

%\begin{Theorem}\label{4.6}
% For any $k \in \{1,2,\cdots,r\}$, the sequence of regularization parameters of problem (10) can be set as follows.\\
%(i) If $\sigma_{k}\left(\sum\limits_{i=1}^{n}\tau_{i}X_{i}\right)<\lambda^{(h)}_{max}-\frac{\|\tau\|_{2}\cdot \sqrt{\sum\limits_{i=1}^{n}\|X_{i}\|_{2}^{2}}}{\sqrt{n}}$,
%\begin{center}
%$\lambda^{(h)}_{k}=\max\left\{0,\frac{b^{(h)}+\Delta^{(h)}_{k}}{a_{k}^{(h)}}\right\}.$
%\end{center}
%(ii) If $\sigma_{k}\left(\sum\limits_{i=1}^{n}\tau_{i}X_{i}\right)=\lambda^{(h)}_{max}-\frac{\|\tau\|_{2}\cdot \sqrt{\sum\limits_{i=1}^{n}\|X_{i}\|_{2}^{2}}}{\sqrt{n}}$,
%\begin{center}
%$\lambda^{(h)}_{k}=\begin{cases}
%\frac{c^{(h)}}{-2b^{(h)}}, &b^{(h)}<0;\\
%\emptyset, &\rm{otherwise}.
%\end{cases}$
%\end{center}
%(iii) If $\sigma_{k}\left(\sum\limits_{i=1}^{n}\tau_{i}X_{i}\right)>\lambda^{(h)}_{max}-\frac{\|\tau\|_{2}\cdot \sqrt{\sum\limits_{i=1}^{n}\|X_{i}\|_{2}^{2}}}{\sqrt{n}}$,
%\begin{center}
%$\lambda^{(h)}_{k}\in\left(\lambda^{l^{(2)}}_{k},\lambda^{u^{(2)}}_{k}\right)\cap(0,+\infty),$
%\end{center}
% where $\lambda^{l^{(2)}}_{k}=\frac{b^{(h)}+\Delta^{(h)}_{k}}{a_{k}^{(h)}},$ and
%$\lambda^{u^{(2)}}_{k}=\frac{b^{(h)}-\Delta^{(h)}_{k}}{a_{k}^{(h)}}$.
%
% Then, the solution of this problem  satisfies that $\rm{rank}$$\left(B^{*(h)}(\lambda)\right)\leq k-1$ when $\lambda\in\left(\lambda^{(h)}_{k},\lambda^{(h)}_{k-1}\right].$
%\end{Theorem}
\section{Numerical experiments}
\label{sec:5.1}
We will report some numerical results in this section, which illustrate how to select the regularization parameter $\lambda$ such that the rank of the solution being no more that a given constant. Here, we choose the signal shapes of  device 8-20 and device 4-20 from  Zhou and Li \cite{Z14}, see Figure 1.  The size of device 8-20 and device 4-20 is $64\times 64$, and the rank of these two shaped are 4 and 14, respectively. In the following, we will do the numerical experiments under different noise distributions. When the noise is normal distribution, we consider the nuclear norm regularized least square minimization (LS-NRM). When the noise is $t$ distribution, we consider the nuclear norm regularized Huber minimization (H-NRM).
\begin{figure}[ht]
\centering
\includegraphics[height=5cm]{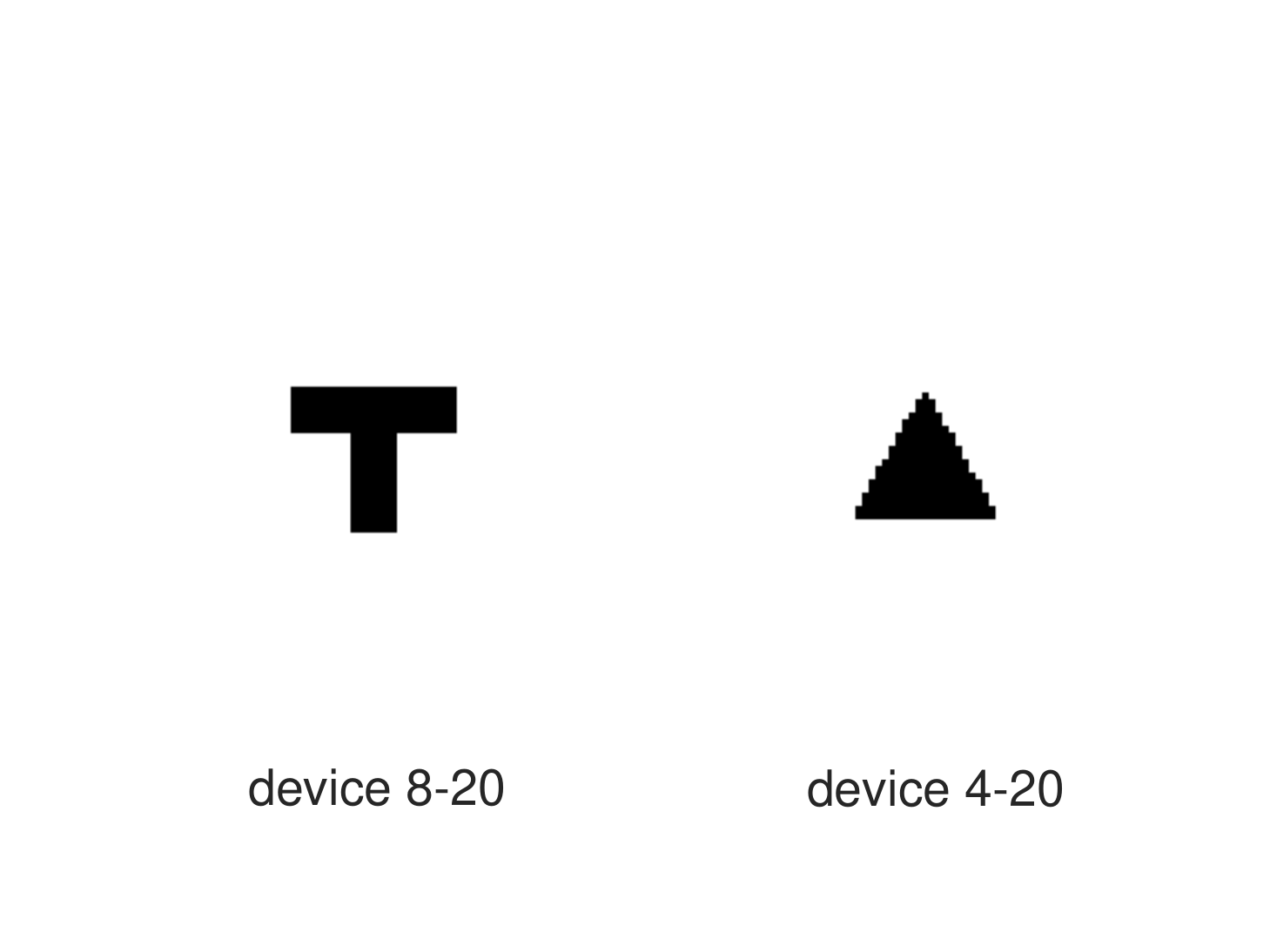}
\caption{Two signal shapes}
\end{figure}

\subsection {Normal distribution}
For every shape, we randomly simulate matrix $X_{i}$ $(i=1,2,\cdots,500)$ with elements obeying the standard Gaussian distribution, and the noise $\epsilon_{i}\sim N(0,0.1)$. Then, $y_{i}$ is obtained by $y_{i}=\langle X_{i}, B\rangle+\epsilon_{i}$, where $B$ is the numerical matrix converted by the shape and the elements of B  are 0 or 1.
\begin{figure}[ht]
\centering
\includegraphics[height=5.5cm]{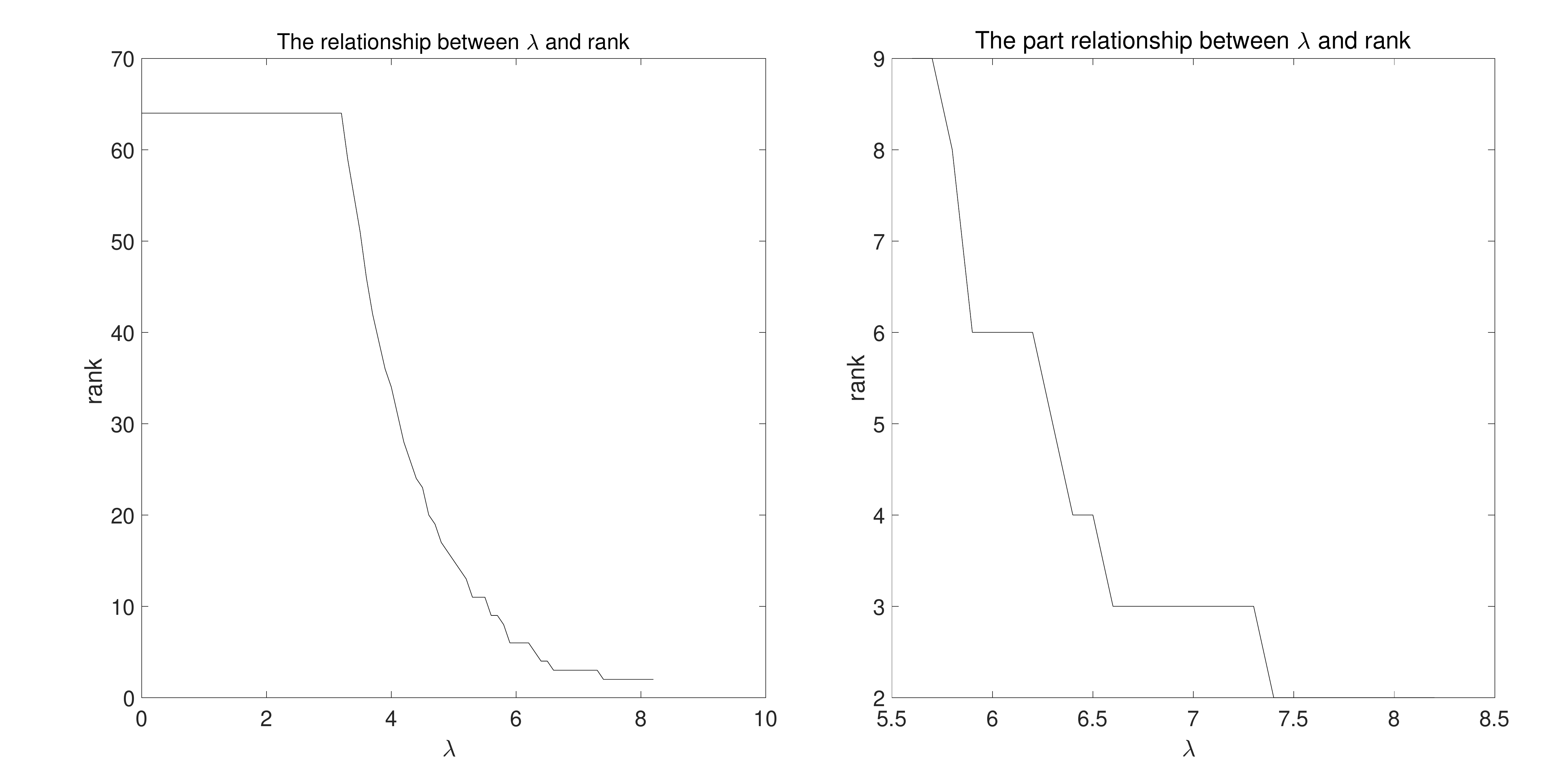}
\caption{Regularization parameter selection under Theorem 4.1 on signal device8-20}
\end{figure}

To recovery the given shapes by the nuclear norm regularized least square minimization problem (LS-NRM), we have to decide the best regularization parameter $\lambda$.  One nature method is cross validation which means that we need to try all $\lambda\in (0,\lambda^{(ls)}_{max}]$ and $\lambda^{(ls)}_{max}$ is defined in Section 4.1. In Figure 2, we show the relationship between $\lambda$ and the rank of the solution, on the shape device 8-20. From this figure, we know that the possible interval of the regularization parameter is $[6.4000,6.5000)$ where $\lambda^{(ls)}_{5}=6.4000$ and $\lambda^{(ls)}_{4}=6.5000$, rather than $(0,8.2043)$, where 8.2043 is the value of $\lambda^{(ls)}_{max}$ on this shape. Thus, under the rank of the solution is no more than 4, the regularization parameter selection result in Theorem 4.1 shrinks the possible interval of $\lambda$ from $[0,8.2043)$ to $[6.4000,6.5000)$.

Similarly, Figure 3 considers about the relationship between $\lambda$ and the solution rank on the shape device 4-20. Same as the analysis of Figure 2, the interval of the  regularization parameter can be shrink to $[5.2000,5.300)$ from $[0,8.1653)$, where $\lambda^{(ls)}_{15}=5.2000$, $\lambda^{(ls)}_{16}=5.3000$ and 8.1653 is the value of $\lambda^{(ls)}_{max}$ on this shape.
\begin{figure}[ht]
\centering
\includegraphics[height=5.5cm]{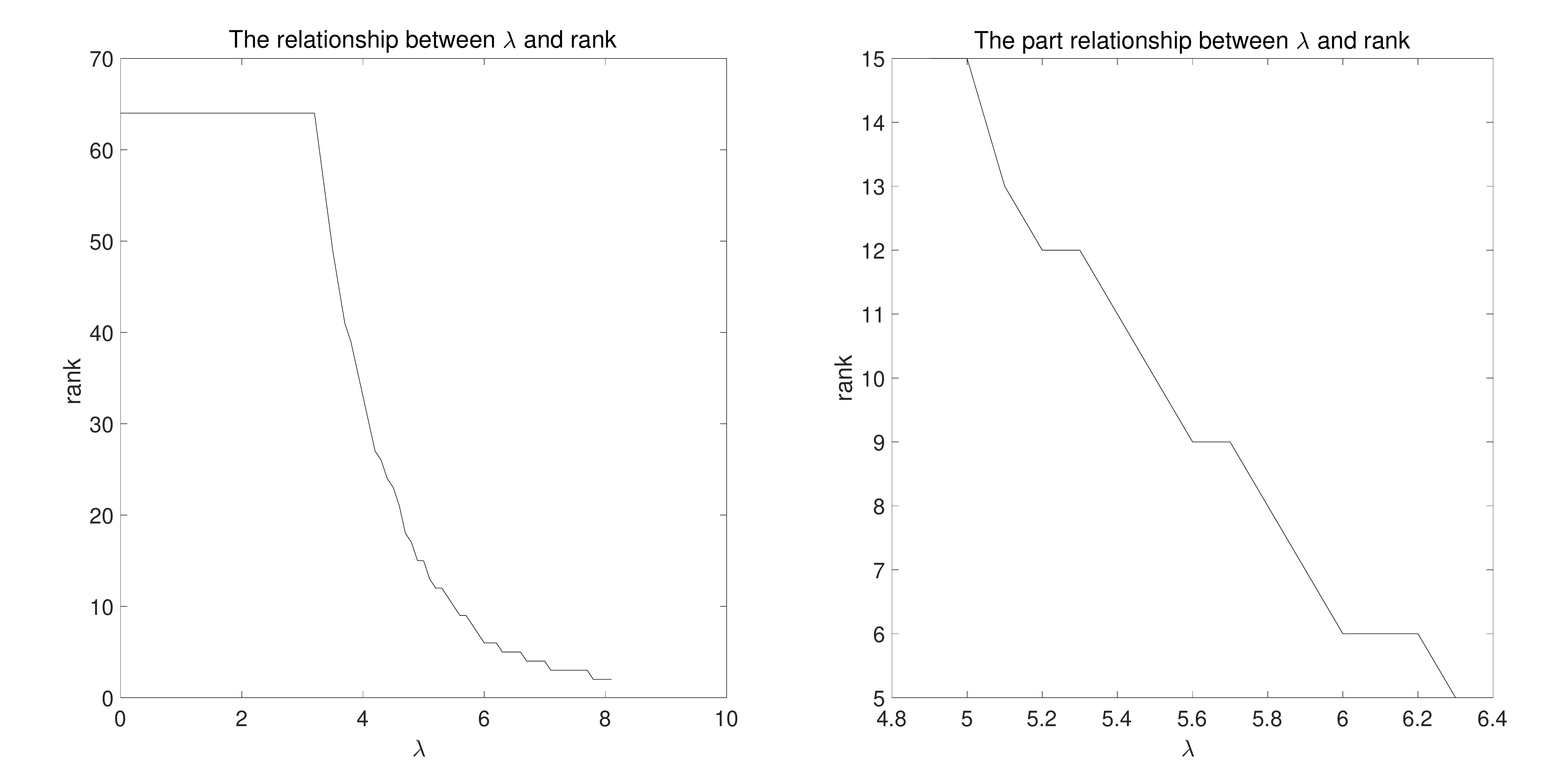}
\caption{Regularization parameter selection under Theorem 4.1 on signal device4-20}
\end{figure}

\subsection{$t$ distribution}
In order to present the efficiency of the regularization parameter selection rule for the nuclear norm regularized Huber minimization problem (H-NRM). Here, the parameter $\kappa$ of Huber function is 2.5. The way to simulate $\{X_{i},y_{i}\}_{i=1}^{500}$ is similar as above, except that the noise $\epsilon_{i}\sim t(3)$ as  in Elsener and Geer \cite{E18}. Figure 4 focuses on the shape device 8-20, the numerical result shows that $\lambda^{(h)}_{max}= 8.1615$, and we only need to consider the regularization parameter in interval $[6.6000,6.8000)$ when using the cross validation to decide the best choice of $\lambda$.  From the rank result of device 4-20 in Figure 5, $\lambda^{(h)}_{max}= 8.1292$ and the possible interval of the regularization parameter is shrunken to $[5.2000,5.1000)$.
\begin{figure}[ht]
\centering
\includegraphics[height=5.5cm]{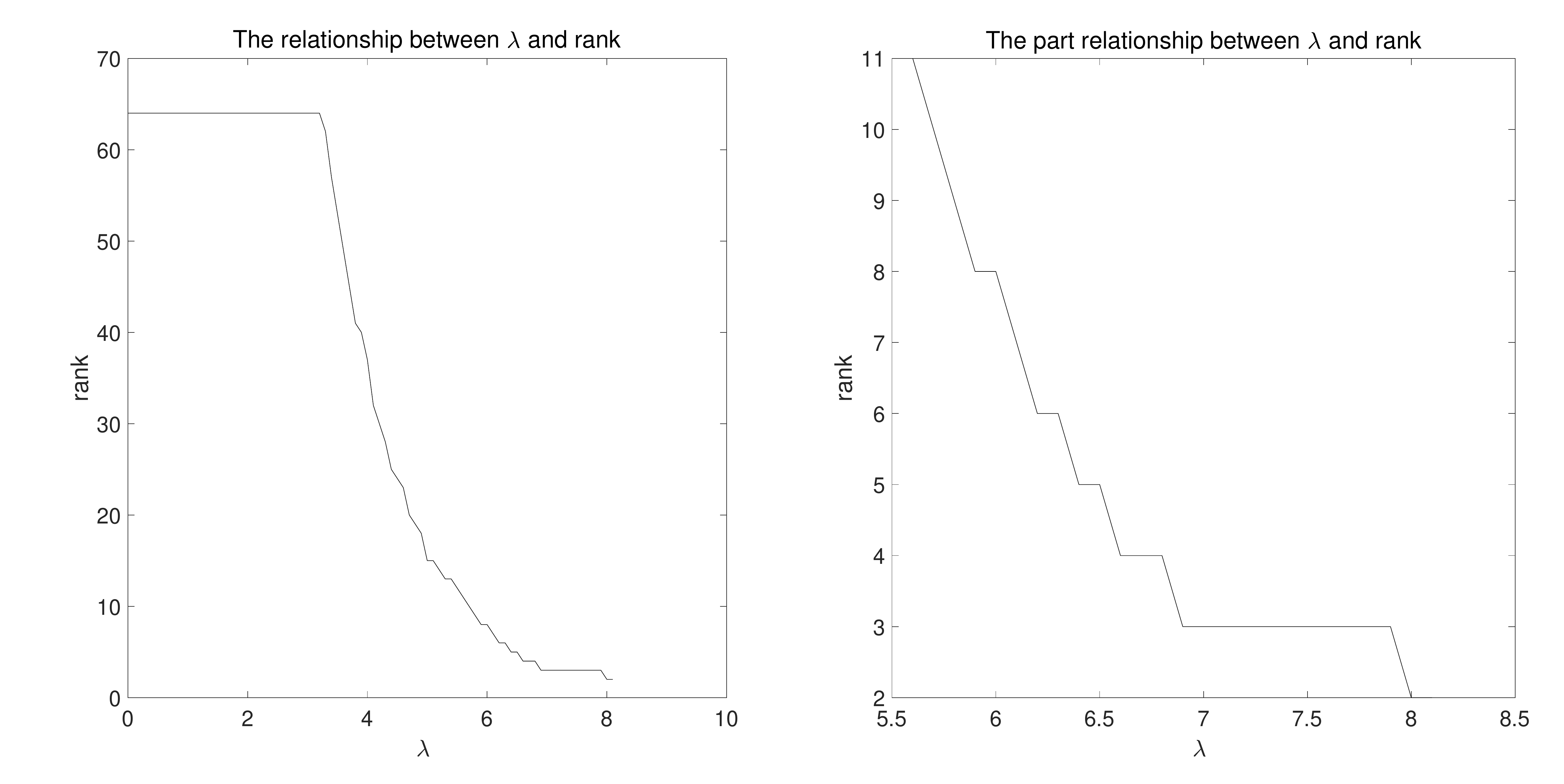}
\caption{Regularization parameter selection under Theorem 5.1 on signal device8-20}
\end{figure}

\begin{figure}[ht]
\centering
\includegraphics[height=5.5cm]{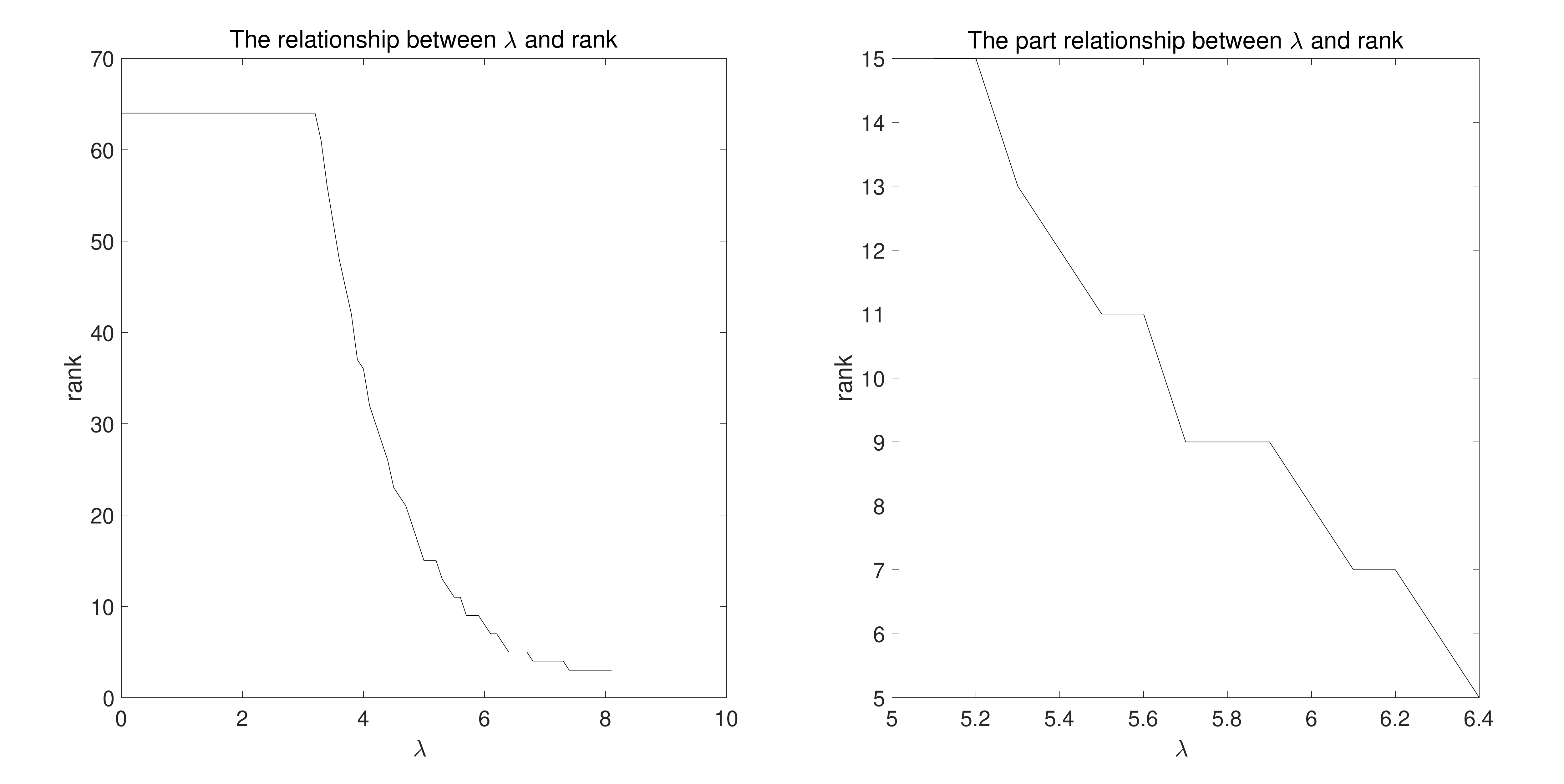}
\caption{Regularization parameter selection under Theorem 5.1 on signal device4-20}
\end{figure}
\section{Conclusion}
\label{sec:6}
With the help of duality theory, we propose a novel regularization parameter selection rule for the general nuclear norm regularized minimization problem (NRM), which is a popular model to recovery the low rank matrix. When we want the rank of the solution being no more than a constant, this rule helps to select the regularization parameter, which relates to  feasible points of NRM and its dual problem.
Moreover, this idea is applied to the nuclear norm regularized least square minimization and nuclear norm regularized Huber minimization, respectively. We get the sequence of closed-form regularization parameters of these two problems. Finally, we illustrate the regularization parameter selection rule on signal shapes, which indicate that our proposed rule can shrink the interval of the regularization parameter effectively.
\section*{Acknowledgements}
This work was supported by the National Science Foundation of China (11431002, 11671029).

\end{document}